\documentclass[reqno]{amsart}
\usepackage{amsmath,amsfonts,amssymb,amscd,verbatim,multicol}
\usepackage{mathrsfs}  
\usepackage{fullpage}
\usepackage{xcolor}
\allowdisplaybreaks

\usepackage{enumitem}
\usepackage{hyperref}

\numberwithin{equation}{section}

\theoremstyle{definition}
\newtheorem{theorem}{Theorem}[section]

\newtheorem{corollary}[theorem]{Corollary} 
\newtheorem{definition}[theorem]{Definition} 

\newtheorem{lemma}[theorem]{Lemma}
\newtheorem{proposition}[theorem]{Proposition}

\newtheorem{thm}{Theorem}

\newtheorem*{definition*}{Definition} 
\newtheorem*{example*}{Example}

\DeclareMathOperator\ord{ord}

\DeclareMathOperator\supp{supp}

\renewcommand\int{\mathrm{int}}

\newcommand\inv{^{-1}}

\newcommand\iso{\cong}
\newcommand\kk{\Bbbk}

\newcommand\cX{\mathcal X}

\newcommand\ZZ{\mathbb Z}

\newcommand\ba{\mathbf a}
\newcommand\bb{\mathbf b}
\newcommand\be{\mathbf e}
\newcommand\bp{\mathbf p}
\newcommand\bq{\mathbf q}

\newcommand\bfs{\mathbf s}
\newcommand\bt{\mathbf t}

\newcommand\bzero{\mathbf 0}

\newcommand\balpha{\boldsymbol \alpha}
\newcommand\bbeta{\boldsymbol \beta}
\newcommand\bpartial{\boldsymbol \partial}
\newcommand\bdelta{\boldsymbol \delta}

\newcommand\bnabla{\boldsymbol \nabla}

\newcommand\fsl{\mathfrak{sl}}

\newcommand\grp[1]{\langle #1 \rangle}
\newcommand\restrict[1]{\raisebox{-.3ex}{$|$}_{#1}}

\begin{document}

\title{Twisted generalized Weyl Poisson algebras of type $(A_1)^n$}

\author[Gaddis]{Jason Gaddis}
\address{Miami University, Department of Mathematics, Oxford, Ohio 45056} 
\email{gaddisj@miamioh.edu}

\subjclass{17B63, 17B20, 13A50}
\keywords{Poisson algebra, twisted generalized Weyl Poisson algebra, simple rings}

\begin{abstract}
We introduce a generalization of generalized Weyl Poisson algebras. This is a Poisson analogue of the twisted generalized Weyl algebras defined by Mazorchuk and Turowska. We prove existence of these algebras in two ways, using Ore extensions and by using skew Laurent Poisson algebras. It is shown that this structure is preserved under tensor products, Poisson twists, and by taking invariant rings. Finally, we prove a simplicity criterion for these Poisson algebras.
\end{abstract}

\maketitle

\section{Introduction}

Throughout, let $\kk$ be a field. All algebras are $\kk$-algebras. A \emph{Poisson algebra} is a pair $(R,\{,\})$ such that $R$ is a commutative algebra, $R$ is a Lie algebra under $\{,\}$, and $\{r,-\}:R \to R$ is a derivation for each $r \in R$. 

Generalized Weyl algebras (GWAs) include the classical Weyl algebras, primitive quotients of $U(\fsl_2)$, as well as various families of quantum algebras. Mazorchuk and Turowska introduced a generalization of this, called \emph{twisted generalized Weyl algebras} (TGWAs) \cite{MT}. Within this class of algebras, Hartwig identified those of Cartan type \cite{hart3}. In this work, we are particularly interested in those of Cartan type $(A_1)^n$, which include various versions of multiparameter quantized Weyl algebras \cite{FH1}.

A Poisson analogue of the GWA construction was first observed by Cho and Oh, arising from semiclassical limits of quantum generalized Weyl algebras \cite{CO}. Bavula generalized this construction to higher rank algebras called generalized Weyl Poisson algebras (GWPAs) \cite{Bpoisson}. These also appear in the work of Castellan \cite{castellan}. In this work, we introduce an analogue of TGWAs for Poisson algebras.

\begin{definition}\label{defn.tgwpa}
Fix $n \in \ZZ_+$. Let $R$ be a Poisson algebra, let $\bpartial = (\partial_1,\hdots,\partial_n)$ be an $n$-tuple of commuting Poisson derivations of $R$, and let $\ba=(a_1,\hdots,a_n)$ be an $n$-tuple of Poisson central regular elements of $R$. Assume further that there exists $\mu_{ij},\gamma_{ij} \in \kk$ such that $\partial_j(a_i) = \gamma_{ij} a_i$ and 
\begin{align}\label{eq.gamma}
	\gamma_{ij}+\gamma_{ji} = \mu_{ij} + \mu_{ji} \qquad \text{for all $i \neq j$.}
\end{align}
Given the above data, a \emph{twisted generalized Weyl Poisson algebra of type $(A_1)^n$} is
\[ R[x_1,y_1,\hdots,x_n,y_n]/(y_1x_1-a_1,\hdots,y_nx_n-a_n)\]
with Poisson bracket
\begin{align*}
&\{r,x_i\} = \partial_i(r)x_i, \quad 
	\{ r,y_i \} = -\partial_i(r)y_i,  \quad
	\{y_i,x_i\} = \partial_i(a_i),
	& &i \in [n], r \in R, \\
&\{y_i,x_j\} = \mu_{ij}x_jy_i, \quad 
\{x_i,x_j\} = (\gamma_{ij}-\mu_{ij}) x_i x_j, \quad
\{y_i,y_j\} = (\mu_{ji}-\gamma_{ij}) y_i y_j  & &i,j \in [n], i \neq j.
\end{align*}
We denote the above by $A_\mu(R,\bpartial,\ba)$.
\end{definition}

Because we do not consider other types in this work, we will use TGWPA to refer to the algebras in Definition \ref{defn.tgwpa}. 

When $\gamma_{ij}=\mu_{ij}=0$ for all $i,j \in [n]$, we obtain the GWPAs in \cite{Bpoisson}. If, additionally, $R=\kk[h_1,\hdots,h_n]$, $\ba = (h_1,\hdots,h_n)$, and $\bpartial=\left( \frac{d}{dh_1}, \hdots \frac{d}{dh_n} \right)$, then $A(R,\bpartial,\ba)$ is the $n$th Weyl Poisson algebra. 

In analogy to the TGWA case, we call \eqref{eq.gamma} the \emph{consistency equations}. These are implied by the antisymmetry of the bracket. In particular, for $i,j \in [n]$ and $i \neq j$, we have
\[ (\gamma_{ij}-\mu_{ij}) x_i x_j = \{x_i,x_j\} = -\{x_j,x_i\} = -(\gamma_{ji}-\mu_{ji}) x_j x_i,\]
so $\gamma_{ij}-\mu_{ij} = -(\gamma_{ji}-\mu_{ji})$, which is clearly equivalent to \eqref{eq.gamma}.

Given a GWPA $A$ of rank $n$, there is a $\ZZ^n$-grading obtained by setting $\deg(x_i)=\be_i$, $\deg(y_i)=-\be_i$, and $\deg(r)=0$ for all $r \in R$. This $\ZZ^n$-grading respects the Poisson bracket, so we say that $A$ is a \emph{$\ZZ^n$-graded Poisson algebra}.

In Section \ref{sec.exist}, we establish existence of TGWPAs in two different ways.

\begin{thm}
Let $A=A_\mu(R,\bpartial,\ba)$ be a TGWPA.
\begin{itemize}
\item (Theorem \ref{thm.construct}) $A$ is a quotient of an iterated Poisson Ore extensions.
\item (Theorem \ref{thm.rees}) $A$ is a subalgebra of a skew Laurent Poisson algebra.
\end{itemize}
\end{thm}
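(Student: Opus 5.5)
The plan has two halves, one for each bullet of the theorem: build an iterated Poisson Ore extension $B$ and show $A$ is its quotient by a Poisson ideal, then embed $A$ into a skew Laurent Poisson algebra over $R$.

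\textbf{Ore extension construction.} Start from $R$ and adjoin $x_1,\hdots,x_n$ one at a time as a Poisson Ore extension of the form $S_k=S_{k-1}[x_k;\alpha_k]$, with no derivation part. The Poisson derivation $\alpha_k$ is given by $\alpha_k(r)=\partial_k(r)$ on $R$ and $\alpha_k(x_j)=(\gamma_{jk}-\mu_{jk})x_j$ for $j<k$. One checks that $\alpha_k$ is a Poisson derivation of $S_{k-1}$. This uses that the $\partial_i$ commute and that the constants are compatible with the antisymmetry $\gamma_{jk}-\mu_{jk}=-(\gamma_{kj}-\mu_{kj})$, i.e.\ the consistency equations \eqref{eq.gamma}.

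Next adjoin $y_1,\hdots,y_n$ as Poisson Ore extensions $T_k=T_{k-1}[y_k;\beta_k,\delta_k]$, with
\[
\begin{aligned}
\beta_k(r)&=-\partial_k(r), & \beta_k(x_j)&=-\mu_{kj}x_j \ (j\neq k), & \beta_k(y_j)&=(\mu_{kj}-\gamma_{jk})y_j \ (j<k),\\
\beta_k(x_k)&=0, & \delta_k(x_k)&=\partial_k(a_k), & \delta_k(R)&=\delta_k(x_j)=\delta_k(y_j)=0 \ (j\neq k).
\end{aligned}
\]
The real work is Oh's compatibility conditions for each pair $(\beta_k,\delta_k)$: $\beta_k$ is a Poisson derivation, $\delta_k$ is a Poisson $\beta_k$-derivation, and $\delta_k(\{u,v\})=\{\delta_k(u),v\}+\{u,\delta_k(v)\}+\beta_k(u)\delta_k(v)-\delta_k(u)\beta_k(v)$. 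Verifying these on generators is the main obstacle. The terms involving $\delta_k(x_k)=\partial_k(a_k)$ should close up because $\partial_j(\partial_k(a_k))=\gamma_{kj}\partial_k(a_k)$, using commutativity and $\partial_k(a_k)=\gamma_{kk}a_k$. I would rather verify the needed Jacobi identities on generator triples, which suffices for an iterated extension, than grind through general formulas.

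\textbf{Quotient step.} Show that the ideal $I=(y_kx_k-a_k)_k$ of $B=T_n$ is a Poisson ideal. It suffices to show each bracket $\{g,y_kx_k-a_k\}$, with $g$ a generator, lies in $I$.
\begin{itemize}
\item For $r\in R$: $\{r,y_kx_k\}=-\partial_k(r)y_kx_k+\partial_k(r)y_kx_k=0$, and $\{r,a_k\}=0$ since $a_k$ is Poisson central.
\item For $x_j$ with $j\neq k$: $\{x_j,y_kx_k\}=(-\mu_{kj}+\gamma_{jk}-\mu_{jk})x_jy_kx_k$, while $\{x_j,a_k\}=-\partial_j(a_k)x_j=-\gamma_{kj}a_kx_j$. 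The difference is $(\gamma_{jk}-\mu_{kj}-\mu_{jk})x_jy_kx_k+\gamma_{kj}a_kx_j$. Modulo $I$ this becomes $(\gamma_{jk}+\gamma_{kj}-\mu_{jk}-\mu_{kj})a_kx_j$, which vanishes by \eqref{eq.gamma}.
\item For $x_k$ and for the $y_j$: analogous computations.
\end{itemize}
Then $A=B/I$ carries exactly the bracket of Definition~\ref{defn.tgwpa}.

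\textbf{Skew Laurent embedding.} Form $L=R[X_1^{\pm1},\hdots,X_n^{\pm1}]$ with bracket
\[
\{r,X_i\}=\partial_i(r)X_i,\qquad \{X_i,X_j\}=(\gamma_{ij}-\mu_{ij})X_iX_j.
\]
This is the localization of the first stage $S_n$ above, hence a Poisson algebra. Define $\phi:A\to L$ by $x_i\mapsto X_i$, $y_i\mapsto a_iX_i^{-1}$ and $r\mapsto r$. Check that $\phi$ respects the brackets; for example,
\[
\{a_iX_i^{-1},X_i\}=\{a_i,X_i\}X_i^{-1}=\partial_i(a_i),
\]
using $\{X_i^{-1},X_i\}=0$. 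Injectivity uses regularity of the $a_i$ and a $\ZZ^n$-graded basis argument: $A$ is free over $R$ on monomials $x^{\alpha}y^{\beta}$ with disjoint supports, which follows from the Ore construction, and these map to $R$-independent elements $a^{\beta}X^{\alpha-\beta}$.
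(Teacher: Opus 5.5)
Your route is the paper's. You use the same tower $R[x_1]\cdots[x_n][y_1]\cdots[y_n]$ with the same twisting data: your $\alpha_k$ and $\beta_k$ agree with the paper's extended $\partial_k$ and $\phi_k$ after \eqref{eq.gamma}. You take the same quotient and the same embedding $x_i\mapsto t_i$, $y_i\mapsto a_it_i^{-1}$. However, $\delta_k$ has the wrong sign, and one of your deferred checks then fails.

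Your $\beta_k$ and your computation of $\{x_j,y_kx_k\}$ use the convention $\{u,y_k\}=\beta_k(u)y_k+\delta_k(u)$. With it, $\delta_k(x_k)=\partial_k(a_k)$ gives $\{y_k,x_k\}=-\partial_k(a_k)$, the opposite of Definition \ref{defn.tgwpa}. Then
\[
\{x_k,\,y_kx_k-a_k\}=\{x_k,y_k\}x_k+\partial_k(a_k)x_k=2\partial_k(a_k)x_k ,
\]
which is not in $I$ in general. For the Weyl Poisson algebra it is $2x\notin(yx-h)$ when $\mathrm{char}\,\kk\neq 2$. So the ``analogous computation'' for $x_k$ breaks. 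The fix is $\delta_k(x_k)=-\partial_k(a_k)$, which is the paper's $\psi_k$.

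Relatedly, the compatibility identity you quote has its last two signs reversed for this convention. It should read $\delta(\{u,v\})=\{\delta(u),v\}+\{u,\delta(v)\}-\beta(u)\delta(v)+\delta(u)\beta(v)$, as in the paper. With your version and your $\beta_k$, the pair $(r,x_k)$ would force $\partial_k(r)\partial_k(a_k)=-\partial_k(r)\partial_k(a_k)$. The identity is linear in $\delta$, so the sign of $\delta_k$ does not affect whether the Ore extension exists, only which bracket it produces. That is why the error shows up only at the quotient step.

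Also, $\partial_k(a_k)=\gamma_{kk}a_k$ is not a hypothesis. The relation $\partial_j(a_i)=\gamma_{ij}a_i$ is imposed only for $i\neq j$, and it fails for the Weyl Poisson algebra, where $\partial_k(a_k)=1$. You do not need it. For $j\neq k$, $\partial_j\partial_k(a_k)=\partial_k\partial_j(a_k)=\gamma_{kj}\partial_k(a_k)$ by commutativity alone. The other ingredient, for the pair $(r,x_k)$, is that $\partial_k(a_k)$ is Poisson central by Lemma \ref{lem.cnt}. With these corrections the Ore half goes through as in Lemmas \ref{lem.alpha_beta}--\ref{lem.ab_ext} and Theorem \ref{thm.construct}.

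Your Laurent half is fine and slightly more careful than the paper's. Obtaining $L$ as the localization of the $x$-stage makes its Poisson structure automatic, without the paper's assumption that $R$ has trivial bracket. You also justify injectivity, which the paper calls well known. For that you only need that $A$ is $R$-spanned by disjoint-support monomials, which is immediate from $y_ix_i=a_i$. Independence of the images then comes from regularity of the $a_i$, not from the Ore construction.
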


In Section \ref{sec.construct}, we study how the TGWPA structure is preserved under several standard constructions. We also show that the multiparameter Weyl Poisson algebras may be realized as TGWPAs.

\begin{thm}
Let $A$ and $A'$ be TGWPAs.
\begin{itemize}
\item (Proposition \ref{prop.tensor}) The tensor product $A \otimes A'$ is a TGWPA.
\item (Theorem \ref{thm.twist}) Let $\delta$ be a twisting system as defined in Section \ref{sec.twist}. Then the Poisson twist $A^\delta$ is a TGWPA.
\item (Theorem \ref{thm.invariants}) Let $\phi$ be a diagonal automorphism of $A$ as defined in Section \ref{sec.invariants}. The invariant ring $A^{\langle \phi \rangle}$ is again a TGWPA.
\end{itemize}
\end{thm}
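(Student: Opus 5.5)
The statement bundles three results (tensor products, Poisson twists, invariant rings), and the plan is the same for each. Exhibit explicit TGWPA data $(R'',\bpartial'',\ba'',\mu'')$ for the new algebra. Check the hypotheses of Definition \ref{defn.tgwpa}. Then produce a Poisson isomorphism from $A_{\mu''}(R'',\bpartial'',\ba'')$ onto the new algebra, defined on generators. Because existence is already guaranteed by Theorem \ref{thm.construct}, such a map is well defined once it respects the relations $y_ix_i=a_i$ and preserves brackets on generators. Bijectivity will come from comparing $\ZZ^n$-graded components: each component of a TGWPA is a free, or at least regular, $R$-module generated by monomials in the $x_i$ and $y_j$.

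\textbf{Tensor product.} Write $A=A_\mu(R,\bpartial,\ba)$ of rank $n$ and $A'=A_{\mu'}(R',\bpartial',\ba')$ of rank $m$. Take $R''=R\otimes R'$ with the product bracket, and $\ba''=(a_1\otimes1,\dots,1\otimes a'_m)$. Set $\partial''_i=\partial_i\otimes\mathrm{id}$ for $i\le n$ and $\mathrm{id}\otimes\partial'_j$ for the remaining indices. For the cross terms take $\gamma''=\mu''=0$.

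\begin{itemize}
\item These derivations commute, and they are Poisson derivations of the product bracket.
\item Each $a''_i$ is Poisson central, and it is regular because we work over a field, so tensoring is flat.
\item The consistency equations hold blockwise. The cross brackets in $A\otimes A'$ vanish, which matches the choice $\gamma''=\mu''=0$.
\end{itemize}

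The natural map is then an isomorphism, since $\kk[x,y]/(\cdots)$ quotients commute with $\otimes$.

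\textbf{Twists.} Section \ref{sec.twist} defines $A^\delta$ by a twisting system $\delta$. The natural guess is that it deforms the bracket on homogeneous elements by $\{u,v\}_\delta=\{u,v\}+c(\deg u,\deg v)uv$, or by a derivation-type correction compatible with the $\ZZ^n$-grading. The plan is as follows.

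\begin{enumerate}
\item Keep $R$ and $\ba$ unchanged.
\item Replace $\partial_i$ by $\partial_i+\delta_i$, where $\delta_i$ acts on $R$ via the twist. Compute the resulting $\gamma_{ij}$ from $\partial_j(a_i)$.
\item Read off the new $\mu_{ij}$ from $\{y_i,x_j\}_\delta$.
\end{enumerate}

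Consistency \eqref{eq.gamma} is automatic once the twisted bracket is shown to be antisymmetric, by the computation in the introduction. The \textbf{main obstacle} is the bracket $\{y_i,x_i\}_\delta$. Its correction term is a multiple of $y_ix_i=a_i$, and it must be absorbed so that it equals $(\partial_i+\delta_i)(a_i)$. This is what forces the choice of new derivations. I would first test the case where the twist is a skew-symmetric bicharacter, and then the general one.

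\textbf{Invariants.} Take $\phi$ diagonal, with $\phi(x_i)=\lambda_ix_i$, $\phi(y_i)=\lambda_i^{-1}y_i$, and $\phi$ restricting to $R$. For simplicity assume $\phi|_R=\mathrm{id}$, or handle it via the grading, and assume each $\lambda_i$ has finite order $k_i$. The invariant ring is then spanned by the $R^{\phi}$-multiples of monomials in $X_i=x_i^{k_i}$ and $Y_i=y_i^{k_i}$, in the case where the $\lambda_i$ are independent. If they are not independent, the definition in Section \ref{sec.invariants} presumably imposes that. The new data are:

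\begin{itemize}
\item $R''=R^{\phi}$;
\item $\partial''_i=k_i\partial_i$;
\item $a''_i=\prod_{t=0}^{k_i-1}\sigma_i^{t}(a_i)$ in the Poisson sense, which reduces to $a_i^{k_i}$ up to the needed shifts, computed from $Y_iX_i=y_i^{k_i}x_i^{k_i}$ using $y_ix_i=a_i$ and commutativity.
\end{itemize}

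Since the algebra is commutative, $Y_iX_i=a_i^{k_i}$ exactly. The derivation rule gives $\{r,X_i\}=k_i\partial_i(r)X_i$, the cross brackets scale by $k_ik_j$, and $\{Y_i,X_i\}=k_i^2a_i^{k_i-1}\partial_i(a_i)=k_i\partial''_i(a''_i)$. This forces a rescaling of $X_i$ or of the derivation, which must be checked in characteristic zero. Regularity and centrality of $a''_i$ are immediate.
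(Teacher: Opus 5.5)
Your overall route is the paper's: in each case you write down new TGWPA data and check brackets on generators. For the tensor product and the invariant ring, your data coincide with those of Proposition \ref{prop.tensor} and Theorem \ref{thm.invariants}: $R^{\grp{\phi}}$, $k_i\partial_i$, $a_i^{k_i}$, $k_ik_j\mu_{ij}$, $k_ik_j\gamma_{ij}$. The tensor-product part is fine. The other two parts do not go through as written.

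\textbf{Invariants.} Your identity $\{Y_i,X_i\}=k_i^2a_i^{k_i-1}\partial_i(a_i)=k_i\partial''_i(a''_i)$ is an arithmetic slip. Since $\partial_i(a_i^{k_i})=k_ia_i^{k_i-1}\partial_i(a_i)$, in fact $k_i^2a_i^{k_i-1}\partial_i(a_i)=k_i\partial_i(a_i^{k_i})=\partial''_i(a''_i)$. So your data already satisfy Definition \ref{defn.tgwpa}, and nothing is ``forced''.

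The remedy you propose could not have absorbed a genuine extra factor $k_i$ anyway. Replacing $X_i$ by $cX_i$ multiplies $a''_i$ and $\{Y_i,X_i\}$ by the same $c$, and altering $\partial''_i$ breaks $\{r,X_i\}=k_i\partial_i(r)X_i$. The characteristic caveat is moot: an element of $\kk^\times$ of order $k_i$ forces $\mathrm{char}\,\kk\nmid k_i$. The product $\prod_t\sigma_i^t(a_i)$ is a leftover from the associative setting; commutativity gives $Y_iX_i=a_i^{k_i}$ directly.

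You should add two small facts. First, $a_i\in R^{\grp{\phi}}$ because $\phi(y_ix_i)=y_ix_i$. Second, $k_i\partial_i$ preserves $R^{\grp{\phi}}$, because applying $\phi$ to $\{r,x_i\}=\partial_i(r)x_i$ gives $\phi\partial_i=\partial_i\phi$ on $R$. A nontrivial $\phi\restrict{R}$ is handled by the paper's hypothesis that $\ord(\phi\restrict{R})$ and the $\ord(\alpha_i)$ are pairwise coprime. That hypothesis makes the invariant part of $Rz^{\balpha}$ equal to $R^{\grp{\phi}}z^{\balpha}$ when every $k_i\mid\alpha_i$, and zero otherwise.

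\textbf{Twists.} Here you only guess the construction and verify nothing, and the ``main obstacle'' you identify is not where the content lies. In Section \ref{sec.twist} the twisted bracket is $\langle a,b\rangle=\{a,b\}+a\delta_{|a|}(b)-b\delta_{|b|}(a)$, with $\delta_{\balpha}=\sum_k\alpha_k\delta_{\be_k}$. Each $\delta_{\be_k}$ scales $x_j$ and $y_j$ by constants $q_{kj},p_{kj}\in\kk$ and commutes with the $\partial_j$. This scalar action is what keeps all new structure constants in $\kk$.

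The new derivations are dictated by $\langle r,x_i\rangle=(\partial_i-\delta_{\be_i})(r)\,x_i$, so $\nabla_i=\partial_i-\delta_{\be_i}$. Then $\langle y_i,x_i\rangle=\partial_i(a_i)-(q_{ii}+p_{ii})a_i=\nabla_i(a_i)$ holds automatically, since $\delta_{\be_i}(a_i)=\delta_{\be_i}(y_ix_i)=(q_{ii}+p_{ii})a_i$; nothing needs absorbing. The remaining brackets give $\nu_{ij}=\mu_{ij}-q_{ij}-p_{ji}$ and $\eta_{ij}=\gamma_{ij}-q_{ji}-p_{ji}$, as in Theorem \ref{thm.twist}.

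Your idea of deducing \eqref{eq.gamma} from antisymmetry of $\langle,\rangle$ is a valid shortcut for the paper's direct check: antisymmetry holds because $A^\delta$ is Poisson, and $x_ix_j\neq0$. In the bicharacter case you meant to test first, the twist does not touch $\{r,x_i\}$. So the derivations are unchanged and only $\mu$ moves.
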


Finally, in Section \ref{sec.simple}, we establish a simplicity criterion for TGWPAs. This should be compared to \cite[Theorem 1.1]{Bpoisson} in the GWPA case and \cite[Theorem 7.20]{HO} in the TGWA case.

\begin{thm}[Theorem \ref{thm.simple}]
Let $A=A_\mu(R,\bpartial,\ba)$ be a TGWPA of rank $n$ over a Poisson algebra $R$. Then $A$ is Poisson simple if and only if the following hold:
\begin{enumerate}
\item \label{sim1} $R$ has no proper $\bpartial$-invariant Poisson ideals,
\item \label{sim2} the derivation $\partial^{\balpha}$ is not inner for any $\balpha \in \ZZ^n$, $\balpha \neq \bzero$, and
\item \label{sim3} $Ra_i + R\partial_i(a_i)=R$ for all $i=1,\hdots,n$.
\end{enumerate}
\end{thm}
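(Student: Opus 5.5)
We use the $\ZZ^n$-grading $A=\bigoplus_{\balpha} A_{\balpha}$ and the standard notation $\partial^{\balpha}=\sum_i \alpha_i\partial_i$. Each $A_{\balpha}$ is a cyclic $R$-module generated by a monomial $v_{\balpha}=\prod_{\alpha_i>0}x_i^{\alpha_i}\prod_{\alpha_i<0}y_i^{-\alpha_i}$. We assume the freeness of $A$ over $R$ on these monomials, which follows from the realizations in Theorem \ref{thm.construct} and Theorem \ref{thm.rees}. The relations give $\{r,v_{\balpha}\}=\partial^{\balpha}(r)v_{\balpha}$ for $r\in R$. Throughout, $\partial^{\balpha}$ is inner when it equals $\{c,-\}$ on $R$ for some $c\in R$.

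\textbf{Necessity.} We argue each condition by contraposition.
\begin{enumerate}
\item If $I\subsetneq R$ is a nonzero $\bpartial$-invariant Poisson ideal, consider $J=\sum_{\balpha} Iv_{\balpha}$. Invariance under $\partial_i$ and the Poisson ideal property make $J$ closed under brackets with $R$, $x_i$ and $y_i$. Also $J\cap R=I$, so $J$ is proper.
\item Suppose $\partial^{\balpha}=\{c,-\}$ on $R$ with $\balpha\neq\bzero$. Choose scalars so that $z=c+\sum_i\lambda_i h_i$ is Poisson central, or at least so that some non-scalar element generates a proper Poisson ideal. The natural candidate is $u=\{c,-\}-\partial^{\balpha}$ on $A$, which kills $R$ and acts on $x_i,y_i$ by scalars times $x_i,y_i$. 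So $\{c,-\}$ acts on $A$ like the grading derivation $\partial^{\balpha}+\sum_j\kappa_j\deg_j$. The ideal generated by $c-\kappa$ for a suitable adjustment should be proper; I expect to mimic Bavula's GWPA argument here.
\item If $Ra_i+R\partial_i(a_i)\subseteq\mathfrak m\neq R$, take the ideal generated by $\mathfrak m$-data together with $x_i$. More precisely, the ideal $A x_i A + A\mathfrak m$ is proper, because $\{y_i,x_i\}=\partial_i(a_i)$ and $y_ix_i=a_i$ both lie in $\mathfrak m$.
\end{enumerate}

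\textbf{Sufficiency.} This is the main part. Let $0\neq J$ be a Poisson ideal.

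\emph{Step 1.} Show $J$ is graded. For $r\in R$ and $f=\sum f_{\balpha}\in J$ we have $\{r,f\}=\sum\partial^{\balpha}(r)f_{\balpha}$. Take $f$ of minimal support length and pick two support degrees $\balpha\neq\bbeta$. Then $\partial^{\balpha}(r)f-\{r,f\}$ kills the $\balpha$-component, so by minimality $\partial^{\balpha-\bbeta}(r)f_{\bbeta}=\partial^{\balpha}(r)f_{\bbeta}-\{r,f_{\bbeta}\}\dots$ is forced to satisfy a proportionality relation. The intended conclusion is that $\partial^{\balpha-\bbeta}$ is inner, realized by the ratio of the coefficients. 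The difficulty is that the coefficients are not invertible. Following Bavula, we work with coefficients $r_{\balpha}$ of $f_{\balpha}=r_{\balpha}v_{\balpha}$. We show the set of leading coefficients forms a $\bpartial$-invariant Poisson ideal, using condition~(1) to normalize to $1$. Once a coefficient is $1$, the bracket identity gives $\partial^{\balpha-\bbeta}=\{c,-\}$ with $c$ the other coefficient, contradicting~(2). I expect this step to be the main obstacle, in particular making the normalization compatible with regularity of the $a_i$.

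\emph{Step 2.} Since $J$ is graded, multiply a homogeneous element by suitable $x_i,y_i$ to land in degree $\bzero$. Regularity of the $a_i$ ensures this product is nonzero. Hence $J\cap R\neq0$.

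\emph{Step 3.} We have $J\cap R$ is a Poisson ideal of $R$. It is $\partial_i$-invariant, since $\{r,x_i\}y_i=\partial_i(r)a_i$, combined with the fact that $y_i\{x_i,r\}$ lies in $J$; we reduce $\partial_i(r)a_i,\partial_i(r)\dots$ appropriately. By~(1) the smallest $\bpartial$-invariant Poisson ideal containing the relevant elements is $R$. We then use~(3) to remove the factors of $a_i$ and $\partial_i(a_i)$: from $ra_i\in J$ and $\{y_i,\{x_i, \cdot\}\}$-type brackets we get $r\partial_i(a_i)\in J$. Hence $r\in J$ because $Ra_i+R\partial_i(a_i)=R$. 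Therefore $1\in J$.
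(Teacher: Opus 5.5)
\textbf{Sufficiency.} The gap is at exactly the point you flag. Inside $A$ the minimal-support normalization cannot be run as in the Laurent case. Bracketing with $x_i$ shifts degree and is not diagonal on negative powers: $\{y_i^k,x_i\}=k\partial_i(a_i)y_i^{k-1}$. So neither your coefficient ideals nor $J\cap R$ in Step 3 are visibly $\bpartial$-invariant; from $r\in J\cap R$ you only get $\partial_i(r)a_i$. This is not a technicality: for the ideal of Lemma~\ref{lem.gcd} with $R=\kk[h]$, $\partial=d/dh$, $a=h^2$, one has $J\cap R=(h)$, which is not $\partial$-invariant. So invariance must come from (3), and ``reduce appropriately'' does not say how.

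The missing idea is to localize at the multiplicative set $\cX$ generated by the $x_i$, which are regular because the $a_i$ are. Then $y_i=a_ix_i\inv$ and $B=A\cX\inv$ is the skew Laurent Poisson algebra. There brackets with $x_i^{\pm1}$ act diagonally, and Lemma~\ref{lem.laurent} applies using (1) and (2). Hence $J\cX\inv=B$, so $x^{\balpha}\in J$ for some $\balpha\geq\bzero$. Condition (3) is then applied to monomials, not to $J\cap R$. If $\alpha_i>0$, both $y_ix^{\balpha}=a_ix^{\balpha-\be_i}$ and $\{y_i,x^{\balpha}\}=\alpha_i\partial_i(a_i)x^{\balpha-\be_i}+\lambda y_ix^{\balpha}$ (with $\lambda\in\kk$) lie in $J$. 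Writing $ra_i+s\partial_i(a_i)=1$ gives $x^{\balpha-\be_i}\in J$, and iterating reaches $1\in J$.

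Your redefinition of ``inner'' is also not the paper's, and it is the wrong condition. For $f=v_{\bbeta}+cv_{\balpha}$, the minimal-support identity gives $\{c,r\}=c\,\partial^{\balpha-\bbeta}(r)$. That is inner in the paper's sense ($u\partial(r)=\{u,r\}$ with $u\neq 0$), not $\partial^{\balpha-\bbeta}=\{c,-\}$. With your reading the theorem is false. In characteristic $0$ take $R=\kk[s^{\pm1},t^{\pm1}]$ with $\{s,t\}=st$, $\partial=s\frac{\partial}{\partial s}$, $n=1$, $a=1$. Then $R$ is Poisson simple, no $m\partial$ with $m\neq 0$ is Hamiltonian, and (3) holds. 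Yet $t\inv x$ is Poisson central, so $(t\inv x-1)$ is a proper nonzero Poisson ideal.

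\textbf{Necessity.} Your argument for (1) is fine. Your ideal for (3) is not a Poisson ideal. For $b=\partial_i(a_i)\in\mathfrak m$ one has $\{b,y_i\}=-\partial_i^2(a_i)y_i$, but the degree $-\be_i$ part of $Ax_i+A\mathfrak m$ is only $\mathfrak m y_i$. Concretely, for $R=\kk[h]$, $\partial=d/dh$, $a=h^2$ you are forced to take $\mathfrak m=(h)$, and $\{h,y\}=-y\notin Ax+Ah$.

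The fix is to include $y_i$ as well, as in Lemma~\ref{lem.gcd}: take $J=Ax_i+Ay_i+AI$ with $I=Ra_i+R\partial_i(a_i)$. This $J$ is Poisson for three reasons:
\begin{itemize}
\item $a_i$ and $\partial_i(a_i)$ are Poisson central (Lemma~\ref{lem.cnt});
\item $\partial_j(I)\subseteq I$ for $j\neq i$, using $\partial_j(a_i)=\gamma_{ij}a_i$ and that the $\partial$'s commute;
\item $\partial_i(I)$ only ever appears multiplied by $x_i$ or $y_i$.
\end{itemize}
It is proper because $J\cap R=I$.

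For (2) you give no argument. With the correct notion, the relevant element is $uv_{\balpha}$, which Poisson-commutes with all of $R$ precisely because $u\partial^{\balpha}(r)=\{u,r\}$; this is how the paper argues, via Lemma~\ref{lem.laurent}. Your adjustment of a Hamiltonian $c$ by a grading derivation targets a different condition.
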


\section{Existence of TGWPAs}
\label{sec.exist}

In this section, we show that TGWPAs appear through a variety of constructions. First, we provide background on Poisson Ore extensions and show that TGWPAs are quotients of certain iterated Poisson Ore extensions. Secondly, we discuss Poisson Laurent rings and show that TGWPAs are subalgebras of these. Finally, we show that certain TGPWAs appear as Poisson twists of GWPAs.

We now set up some notation that will be used throughout this section. Fix $n \in \ZZ_+$. Let $R$ be a Poisson algebra, let $\bpartial = (\partial_1,\hdots,\partial_n)$ be an $n$-tuple of commuting Poisson derivations of $R$, and let $\ba=(a_1,\hdots,a_n)$ be an $n$-tuple of Poisson central elements of $R$. Assume further that there exists $\mu_{ij},\gamma_{ij} \in \kk$ such that $\partial_j(a_i) = \gamma_{ij} a_i$ and $\gamma_{ij}+\gamma_{ji} = \mu_{ij} + \mu_{ji}$ for all $i \neq j$. For the purpose of certain proofs, we assume that $\mu_{ii}=0$ for all $i$.

\subsection{Poisson Ore extensions}

A derivation $\partial$ of the Poisson algebra $R$ is a \emph{Poisson derivation} if 
\[ \partial(\{r,s\}) = \{\partial(r),s\} + \{r,\partial(s)\}\]
for all $r \in R$. A Poisson derivation $\partial$ of $R$ is said to be \emph{inner} if there exists $u \in R$ such that $u\partial(r) = \{u,r\}$ for all $r \in R$. We say a Poisson ideal $I$ of $R$ is \emph{$\partial$-invariant} if $\partial(I)\subset I$.

The \emph{Poisson center} of $R$ is $Z(R)=\{ r \in R \mid \{r,-\}=0 \}$. The following is well-known.

\begin{lemma}\label{lem.cnt}
Let $\partial$ be a Poisson derivation of a Poisson algebra $R$ and let $Z$ be the Poisson center of $R$. Then $\partial(z) \in Z$ for all $z \in Z$.
\end{lemma}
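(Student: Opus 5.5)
The plan is to check the defining property of the Poisson center directly, using only that $\partial$ is a Poisson derivation. Fix $z \in Z$ and an arbitrary $r \in R$. We must show $\{\partial(z), r\} = 0$.

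Apply $\partial$ to the identity $\{z,r\} = 0$, which holds because $z \in Z$. Since $\partial$ is a Poisson derivation,
\[ 0 = \partial(\{z,r\}) = \{\partial(z), r\} + \{z, \partial(r)\}. \]
The second term vanishes because $z$ is Poisson central, so $\{z,\partial(r)\} = 0$. Hence $\{\partial(z),r\} = 0$ for every $r \in R$, which means $\{\partial(z),-\} = 0$ and therefore $\partial(z) \in Z$.

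There is no genuine obstacle. The only points to watch are that the linearity of $\partial$ gives $\partial(0)=0$, and that the Poisson derivation identity is applied with $z$ in the first slot, which matches the form stated in the definition.
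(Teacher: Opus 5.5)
Your proposal is correct and is essentially the paper's proof: apply the Poisson derivation $\partial$ to a vanishing bracket involving $z$ and use centrality of $z$ to kill the term $\{z,\partial(r)\}$. The only difference is that the paper expands $\partial(\{r,z\})$ with $z$ in the second slot, while you put $z$ first. Your ordering matches the definition $Z(R)=\{r \mid \{r,-\}=0\}$ directly, so it does not even need antisymmetry.
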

\begin{proof}
Let $r \in R$ and $z \in Z$. Then $0=\{r,z\}$ and so
$0 = \partial(\{r,z\}) = \{\partial(r),z\} + \{r,\partial(z)\} = \{r,\partial(z)\}$.
\end{proof}

Given a Poisson derivation $\partial$ of the Poisson algebra $R$, a derivation $\delta$ of $R$ is a called a \emph{Poisson $\partial$-derivation} of $R$ if
\[ \delta(\{a,b\}_R) = \{ \delta(a),b \}_R + \{ a, \delta(b) \}_R - \partial(a)\delta(b) + \delta(a)\partial(b).\]
When $\partial = 0$, a Poisson $\partial$-derivation is just a (Poisson) derivation of $R$. The \emph{Poisson Ore extension} associated to $(R,\partial,\delta)$ is the polynomial extension $R[t]$ with bracket
\[
    \{r,s\} = \{r,s\}_R \qquad
    \{r,t\} = \partial(r)t + \delta(r)
\]
for all $r,s \in R$. We denote this by $R[t;\partial,\delta]_P$.

The following lemma will simplify some of the computations that follow.

\begin{lemma}\label{lem.short}
Let $p \in \kk$ and let $\tau_1,\tau_2$ be commuting derivations of $R$.
Let $S=R[t_1,t_2]$ be a Poisson algebra over $R$ with bracket,
\[ \{ r,t_1 \} = \tau_1(r)t_1, \quad \{ r,t_2 \} = \tau_2(r)t_2, \quad \{t_1,t_2\} = pt_1t_2.\]
Suppose $\phi$ is a Poisson derivation of $R$ that commutes with $\tau_i$ for $i=1,2$. Choose $q_1,q_2 \in \kk$. Then $\phi$ extends to a Poisson derivation of $S$ by setting $\phi(t_i)=q_it_i$ for $i=1,2$.
\end{lemma}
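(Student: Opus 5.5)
First, $\phi$ extends uniquely to a derivation of the commutative algebra $S=R[t_1,t_2]$ by setting $\phi(t_i)=q_it_i$, by the universal property of polynomial rings (derivations are determined by their values on generators, and any choice on the variables extends). It remains to check the Poisson condition
\[ \phi(\{u,v\}) = \{\phi(u),v\} + \{u,\phi(v)\} \qquad \text{for all } u,v \in S. \]
For fixed $\phi$, both sides are biderivations in $(u,v)$ and antisymmetric. Indeed, the left side is a derivation composed with a biderivation. For the right side, in each argument it has the form (derivation of a biderivation) plus (biderivation evaluated at a derivation), and such a combination is again a derivation in that argument; this is the standard check that $D\circ B - B(D\otimes 1) - B(1\otimes D)$ is a biderivation when $D$ is a derivation and $B$ a biderivation. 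Hence it suffices to verify the identity on pairs of generators: $(r,s)$ with $r,s\in R$, $(r,t_i)$, and $(t_1,t_2)$.

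For $(r,s)$ the identity holds because $\phi$ is a Poisson derivation of $R$ and $\{R,R\}\subset R$. For $(r,t_i)$ the left side is
\[ \phi(\tau_i(r)t_i)=\phi\tau_i(r)t_i+q_i\tau_i(r)t_i. \]
The right side is
\[ \{\phi(r),t_i\}+\{r,q_it_i\}=\tau_i\phi(r)t_i+q_i\tau_i(r)t_i. \]
These agree exactly because $\phi\tau_i=\tau_i\phi$, which is where the commuting hypothesis is used. For $(t_1,t_2)$ the left side is
\[ \phi(pt_1t_2)=p(q_1+q_2)t_1t_2, \]
and the right side is
\[ q_1\{t_1,t_2\}+q_2\{t_1,t_2\}=p(q_1+q_2)t_1t_2, \]
so the two sides agree.

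The main point needing care is the reduction to generators, namely that the defect $E(u,v)=\phi\{u,v\}-\{\phi u,v\}-\{u,\phi v\}$ is a biderivation. I would verify this by expanding $E(u,vw)$ with the Leibniz rule and seeing that the cross terms $\{u,v\}\phi(w)$ and $\phi(v)\{u,w\}$ cancel. Once that is done, $E$ is a biderivation vanishing on a generating set, so it vanishes on all of $S$, and the lemma follows.
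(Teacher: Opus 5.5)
Your proof is correct and follows the paper's own argument: you extend $\phi$ to a derivation of $S$ and verify the Poisson identity on the generator pairs $(r,t_i)$ and $(t_1,t_2)$, using $\phi\tau_i=\tau_i\phi$ exactly where the paper does. Your biderivation argument for the defect $E$ just makes explicit the reduction to generators that the paper leaves implicit; note that $\phi\{u,v\}$ and $\{\phi u,v\}+\{u,\phi v\}$ are not separately biderivations, since only their difference $E$ is, as your final paragraph correctly verifies.
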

\begin{proof}
It is clear that $\phi$ extends to a derivation of $S$.
For $i=1,2$, we have
\[
\{ \phi(r), t_i \} + \{ r, \phi(t_i) \}
	= \tau_i(\phi(r))t_i + q_i\tau_i(r)t_i
	= \phi(\tau_i(r))t_i + \tau_i(r)(q_i t_i)
	= \phi( \tau_i(r)t_i ) = \phi( \{r,t_i\}).\]
Moreover,
\[
\{ \phi(t_1), t_2\} + \{ t_1, \phi(t_2) \}
	= (q_1 + q_2)pt_1t_2
	= p(\phi(t_1)t_2 + t_1\phi(t_2))
	= \phi(p t_1t_2) = \phi(\{t_1,t_2\}).
\]
Hence, $\phi$ extends to a Poisson derivation as claimed.
\end{proof}

The following is now immediate from Lemma \ref{lem.short}.

\begin{lemma}\label{lem.partial_ext}
For $k=2,\hdots,n$, $\partial_k$ extends to a Poisson derivation of $R[x_1;\partial_1]_P \cdots [x_{k-1};\partial_n]_P$ by setting
\[ \partial_k(x_i) = (\mu_{ki}-\gamma_{ki})x_i \quad \text{for all $i<k$}.\]
\end{lemma}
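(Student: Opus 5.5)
We argue by induction on $k$, building the iterated Poisson Ore extension $S_{k-1}=R[x_1;\partial_1]_P\cdots[x_{k-1};\partial_{k-1}]_P$ together with the extended derivations. We read the $j$th step as adjoining $x_j$ using the (already extended) derivation $\partial_j$ of $S_{j-1}$, with $\delta=0$. This gives
\[ \{r,x_j\}=\partial_j(r)x_j, \qquad \{x_i,x_j\}=\partial_j(x_i)x_j=(\mu_{ji}-\gamma_{ji})x_ix_j=(\gamma_{ij}-\mu_{ij})x_ix_j \quad (i<j), \]
where the last equality is the consistency equation \eqref{eq.gamma}. So the bracket on $S_{k-1}$ is of the shape in Lemma \ref{lem.short}, but with $k-1$ variables rather than two. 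The statement to prove at stage $k$ is that setting $\partial_k(x_i)=(\mu_{ki}-\gamma_{ki})x_i$ for $i<k$ gives a Poisson derivation of $S_{k-1}$. This is exactly what is needed to form $S_k=S_{k-1}[x_k;\partial_k]_P$ and continue the induction.

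First, $\partial_k$ extends uniquely to a derivation of the polynomial ring $S_{k-1}=R[x_1,\dots,x_{k-1}]$ with the prescribed values on the $x_i$. Next we show that $\partial_k$ commutes with each extended $\partial_j$, $j<k$. Two derivations commute as soon as they commute on generators, because their commutator is again a derivation. On $R$ this is the hypothesis that the $\partial$'s commute. On $x_i$ both $\partial_k$ and $\partial_j$ act by scalars, so both orders of composition give $(\mu_{ji}-\gamma_{ji})(\mu_{ki}-\gamma_{ki})x_i$. (If $i=j$, then $\partial_j(x_j)$ is not defined on $S_{k-1}$ in that step, and nothing needs checking there.)

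It remains to verify the Poisson derivation identity $\partial_k\{u,v\}=\{\partial_k u,v\}+\{u,\partial_k v\}$. Both sides are biderivations in $(u,v)$ and antisymmetric, so it suffices to check the identity on pairs of generators. There are three cases.
\begin{itemize}
\item For $r,s\in R$, the identity holds because $\partial_k$ is a Poisson derivation of $R$.
\item For $(r,x_i)$, we copy the computation in Lemma \ref{lem.short}. It uses $\partial_k\partial_i=\partial_i\partial_k$ on $R$ and $\partial_k(x_i)=q_ix_i$ with $q_i=\mu_{ki}-\gamma_{ki}$.
\item For $(x_i,x_j)$, both sides equal $(q_i+q_j)(\gamma_{ij}-\mu_{ij})x_ix_j$, exactly as in the second computation of Lemma \ref{lem.short}.
\end{itemize}
Equivalently, we may apply Lemma \ref{lem.short} to each pair of variables with $\phi=\partial_k$.

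The only real obstacle is bookkeeping rather than mathematics. Lemma \ref{lem.short} is stated for two variables with coefficients $\tau_i$ on $R$. For the $(r,x_i)$ check we therefore need commutation of $\partial_k$ with the extended $\partial_i$ on the intermediate ring, not only on $R$. This is why we establish the commutation step above before the generator checks, and why we carry commutation along in the induction. Once this is in place, each of the three generator checks is a one-line computation.
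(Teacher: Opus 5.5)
Your proposal is correct and is essentially the paper's argument: the paper simply declares the lemma immediate from Lemma~\ref{lem.short}, and you spell out the induction on $k$ and the generator checks in $k-1$ variables that this tacitly requires. Two small corrections: it is the difference $\partial_k\{u,v\}-\{\partial_k u,v\}-\{u,\partial_k v\}$, not each side separately, that is an antisymmetric biderivation (this is what justifies checking only on generators); and since $r\in R$ in the $(r,x_i)$ case, only the hypothesis $\partial_k\partial_i=\partial_i\partial_k$ on $R$ is used, so your commutation step on the intermediate rings is harmless but unnecessary.
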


Let $T_0 = R[x_1;\partial_1]_P \cdots [x_n;\partial_n]_P$ where each $\partial_k$, $k=2,\hdots,n$, is extended as in Lemma \ref{lem.partial_ext}. Because the bracket is anti-symmetric, we have
\[ (\mu_{ij}-\gamma_{ij})x_ix_j = \{x_j,x_i\} = - \{x_i,x_j\} = -(\mu_{ji}-\gamma_{ji})x_ix_j,\]
which is satisfied by our hypotheses on the scalars.

\begin{lemma}\label{lem.alpha_beta}
For $k \in [n]$, define derivations $\phi_k$ and $\psi_k$ of $T_0$ by 
$\phi_k(r)=-\partial_k(r)$ and $\psi_k(r)=0$ for all $r \in R$, and
\[
\phi_k(x_i) = \begin{cases}
	0 & \text{ if } i = k \\
	-\mu_{ki} x_i & \text{ if } i \neq k,
\end{cases}\qquad
\psi_k(x_i) = \begin{cases}
	-\partial_i(a_i) & \text{ if } i = k \\
	0 & \text{ if } i \neq k,
\end{cases}
\]
for all $i \in [n]$. Then $\phi_k$ is a Poisson derivation of $T_0$ and $\psi_k$ is an $\phi_k$-Poisson derivation of $T_0$.
\end{lemma}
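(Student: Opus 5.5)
The plan is to check both claims on generators only, after showing that the relevant defect is a biderivation. For derivations $\partial,\delta$ of $T_0$, set
\[ D_{\partial,\delta}(a,b) = \delta(\{a,b\}) - \{\delta(a),b\} - \{a,\delta(b)\} + \partial(a)\delta(b) - \delta(a)\partial(b). \]
The first step is to verify that $D_{\partial,\delta}$ is antisymmetric and a derivation in each argument. Expanding $D(a,bc)$ with the Leibniz rule, the terms $\{a,b\}\delta(c)$ and $\delta(b)\{a,c\}$ cancel, leaving $D(a,b)c + bD(a,c)$. Antisymmetry holds because $\partial(a)\delta(b)-\delta(a)\partial(b)$ is antisymmetric.

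So $D$ vanishes on $T_0$ once it vanishes on pairs of generators $(r,s)$, $(r,x_i)$ and $(x_i,x_j)$ with $r,s\in R$. Being a Poisson derivation is the case $\partial=0$, and being a $\phi_k$-Poisson derivation is the case $\partial=\phi_k$, so both claims reduce to these finitely many checks. Throughout, recall that on $T_0$ we have $\{r,x_i\}=\partial_i(r)x_i$ and $\{x_i,x_j\}=(\gamma_{ij}-\mu_{ij})x_ix_j$, and that $\mu_{kk}=0$.

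For $\phi_k$:
\begin{itemize}
\item On $(r,s)$, $\phi_k=-\partial_k$ is a Poisson derivation of $R$.
\item On $(r,x_i)$, both $\phi_k(\partial_i(r)x_i)$ and $\{-\partial_k(r),x_i\}+\{r,-\mu_{ki}x_i\}$ equal $-\partial_k\partial_i(r)x_i-\mu_{ki}\partial_i(r)x_i$, using that the $\partial_i$ commute.
\item On $(x_i,x_j)$, both sides are scalar multiples of $x_ix_j$, with coefficient $-(\gamma_{ij}-\mu_{ij})(\mu_{ki}+\mu_{kj})$.
\end{itemize}
This is exactly the computation of Lemma~\ref{lem.short}.

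For $\psi_k$ with $\partial=\phi_k$:
\begin{itemize}
\item On $(r,s)$, everything vanishes since $\psi_k(R)=0$.
\item On $(r,x_i)$ with $i\neq k$, all terms vanish.
\item On $(r,x_k)$, the term $\{r,\psi_k(x_k)\}=-\{r,\partial_k(a_k)\}$ is zero, because $a_k$ is Poisson central and Lemma~\ref{lem.cnt} makes $\partial_k(a_k)$ central. The remaining terms are $\psi_k(\partial_k(r)x_k)=-\partial_k(r)\partial_k(a_k)$ and $\phi_k(r)\psi_k(x_k)=\partial_k(r)\partial_k(a_k)$, which cancel.
\end{itemize}

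The main obstacle is the pair $(x_i,x_j)$ with $i\neq j$. If neither index is $k$, all terms vanish; by antisymmetry it suffices to take $i=k$. Then:
\begin{itemize}
\item $\psi_k(\{x_k,x_j\})=-(\gamma_{kj}-\mu_{kj})\partial_k(a_k)x_j$;
\item $\{\psi_k(x_k),x_j\}=-\partial_j(\partial_k(a_k))x_j=-\partial_k(\gamma_{kj}a_k)x_j=-\gamma_{kj}\partial_k(a_k)x_j$, using that the derivations commute and $\partial_j(a_k)=\gamma_{kj}a_k$;
\item $\{x_k,\psi_k(x_j)\}=0$ and $\phi_k(x_k)\psi_k(x_j)=0$;
\item $\psi_k(x_k)\phi_k(x_j)=\mu_{kj}\partial_k(a_k)x_j$.
\end{itemize}
Hence
\[ D=\bigl(-(\gamma_{kj}-\mu_{kj})+\gamma_{kj}-\mu_{kj}\bigr)\partial_k(a_k)x_j=0. \]
This is the one place where the hypothesis $\partial_j(a_k)=\gamma_{kj}a_k$ and the particular choice $\phi_k(x_j)=-\mu_{kj}x_j$ enter, and the signs must be tracked carefully here.
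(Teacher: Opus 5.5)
Your proof is correct and follows the same route as the paper. You handle $\phi_k$ by the computation of Lemma~\ref{lem.short}, and $\psi_k$ by direct checks on the generator pairs $(r,x_k)$ and $(x_k,x_j)$ using Lemma~\ref{lem.cnt} and $\partial_j(a_k)=\gamma_{kj}a_k$, exactly as the paper does. Your biderivation argument only makes explicit the reduction to generators that the paper uses tacitly, and your signs in the $(r,x_k)$ case are right where the paper's displayed line contains a pair of compensating sign slips.
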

\begin{proof}
That $\phi_k$ is a Poisson derivation of $T_0$ follows from Lemma \ref{lem.short}.
We check that $\psi_k$ is an $\phi_k$-Poisson derivation. This is obvious for $\psi_k$ applied to $\{x_i,x_j\}$ and $\{r,x_i\}$ with $i,j \neq k$. Assume $i\neq j$, then
\begin{align*}
\{\psi_i(x_i),x_j\} &+ \{ x_i, \psi_i(x_j)\} - \phi_i(x_i)\psi_i(x_j) + \psi_i(x_i)\phi_i(x_j) \\
	&= \partial_j(-\partial_i(a_i))x_j + \mu_{ij}\partial_i(a_i)x_j \\
	&= -\partial_i(\partial_j(a_i))x_j + \mu_{ij}\partial_i(a_i)x_j \\
	&= (-\gamma_{ij}+\mu_{ij}) \partial_i(a_i)x_j \\
	&= (\gamma_{ij}-\mu_{ij})( \psi_i(x_i)x_j + x_i\psi_i(x_j) ) \\
	&= \psi_i( (\gamma_{ij}-\mu_{ij}) x_ix_j)
	= \psi_i(\{x_i,x_j\}) \\
\{\psi_i(r),x_i\} &+ \{ r, \psi_i(x_i)\} - \phi_i(r)\psi_i(x_i) + \psi_i(r)\phi_i(x_i) \\
	&= \{ r, -\partial_i(a_i)\} + \partial_i(r)\partial_i(a_i)
	= \partial_i(r)\partial_i(a_i) \quad\text{(by Lemma \ref{lem.cnt})} \\
	&= \psi_i(\partial_i(r))x_i + \partial_i(r)\psi_i(a_i)
	= \psi_i(\partial_i(r)x_i) = \psi_i( \{r,x_i\} ).
\end{align*}
This proves the claim for the $\psi_i$.
\end{proof}

We now extend the maps from Lemma \ref{lem.alpha_beta}.

\begin{lemma}\label{lem.ab_ext}
For $k=2,\hdots,n$, $\phi_k$ and $\psi_k$ extend to a Poisson derivation and Poisson $\phi_k$-extension, respectively, of $T_0[y_1;\phi_1,\psi_1]_P\cdots[y_{k-1};\phi_{k-1},\psi_{k-1}]_P$ by setting
\[ \phi_k(y_i) = (\gamma_{ki}-\mu_{ik}) y_i \quad\text{and}\quad \psi_k(y_i)=0 \quad \text{for all $i<k$}.\]
\end{lemma}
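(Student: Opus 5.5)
Let $T_{k-1}=T_0[y_1;\phi_1,\psi_1]_P\cdots[y_{k-1};\phi_{k-1},\psi_{k-1}]_P$. I will argue by induction on $k$. At stage $k$ I assume $\phi_k,\psi_k$ are already a Poisson derivation and a Poisson $\phi_k$-derivation of $T_{k-2}$; for $k-1=1$ this is Lemma \ref{lem.alpha_beta}. I then check that the extensions to $T_{k-1}=T_{k-2}[y_{k-1};\phi_{k-1},\psi_{k-1}]_P$ given by $\phi_k(y_{k-1})=(\gamma_{k,k-1}-\mu_{k-1,k})y_{k-1}$ and $\psi_k(y_{k-1})=0$ have the same properties. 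Both maps extend uniquely to derivations of the polynomial ring $T_{k-1}$, since $y_{k-1}$ is a free variable. Moreover, both the Poisson-derivation identity and the $\phi_k$-derivation identity are bilinear expressions that are "derivation-compatible" in each argument. So it suffices to verify them on pairs of generators in which one entry is $y_{k-1}$, the other being a generator $r\in R$, $x_i$, or $y_i$ with $i<k-1$, or $y_{k-1}$ itself (trivial). I will state this generator-reduction as a short preliminary remark, since the paper has used it implicitly in Lemmas \ref{lem.short} and \ref{lem.alpha_beta}.

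For $\phi_k$ I compare $\phi_k(\{g,y_{k-1}\})$ with $\{\phi_k(g),y_{k-1}\}+\{g,\phi_k(y_{k-1})\}$ in each case. Write $j=k-1$.

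\emph{Case $g=r$.} Here $\{r,y_j\}=-\partial_j(r)y_j$. The identity reduces to $\partial_k\partial_j=\partial_j\partial_k$ together with the scalar on $y_j$ cancelling on both sides.

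\emph{Case $g=y_i$.} Here $\{y_i,y_j\}=(\mu_{ji}-\gamma_{ij})y_iy_j$. This is the argument of Lemma \ref{lem.short}, since both sides scale by the sum of the eigenvalues.

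\emph{Case $g=x_i$ with $i\ne j$.} Here $\{x_i,y_j\}=-\mu_{ji}x_iy_j$, which is again of Lemma \ref{lem.short} type.

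\emph{Case $g=x_j$.} Here $\{x_j,y_j\}=-\partial_j(a_j)$. The left side is $-\phi_k(\partial_j(a_j))=\partial_k\partial_j(a_j)=\gamma_{jk}\partial_j(a_j)$. The right side is
\[(\mu_{kj}-\gamma_{kj})x_j\cdot(-\text{stuff})\ \text{coefficient}+(\gamma_{kj}-\mu_{jk})(-\partial_j(a_j)),\]
giving total coefficient $-(\mu_{kj}-\gamma_{kj})-(\gamma_{kj}-\mu_{jk})=\mu_{jk}-\mu_{kj}$ on $\partial_j(a_j)$, possibly up to sign. Matching this against $\gamma_{jk}$ is exactly where the consistency equation \eqref{eq.gamma} enters: $\gamma_{jk}+\gamma_{kj}=\mu_{jk}+\mu_{kj}$. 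I expect this case to be the main obstacle. The signs and index orders must be tracked carefully against the stated formulas $\phi_k(x_i)=-\mu_{ki}x_i$ on $T_0$ (note this replaces the value from Lemma \ref{lem.partial_ext}) and $\phi_k(y_i)=(\gamma_{ki}-\mu_{ik})y_i$. If the bookkeeping fails, I would recheck which extension of $\phi_k$ on the $x_i$ is in force.

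For $\psi_k$ the twisted identity has two extra terms $-\phi_k(g)\psi_k(y_j)+\psi_k(g)\phi_k(y_j)$, and $\psi_k(y_j)=0$.

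\emph{Case $g\in R\cup\{x_i,y_i: i\ne k\}$ with $g\neq x_k$.} Here $\psi_k(g)=0$, so every term vanishes except $\psi_k(\{g,y_j\})$. That term is also $0$, because $\{g,y_j\}$ is a multiple of an expression built from $\psi_k$-null generators; the one exception is $g=x_j$, where $\psi_k(\partial_j(a_j))=0$ since $\psi_k$ kills $R$.

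\emph{Case $g=x_k$.} The left side is $\psi_k(-\mu_{jk}x_ky_j)=\mu_{jk}\partial_k(a_k)y_j$. The right side is $\{-\partial_k(a_k),y_j\}+(-\partial_k(a_k))(\gamma_{kj}-\mu_{jk})y_j$. Since $\{r,y_j\}=-\partial_j(r)y_j$, this equals $\partial_j\partial_k(a_k)y_j-(\gamma_{kj}-\mu_{jk})\partial_k(a_k)y_j$, and $\partial_j\partial_k(a_k)=\partial_k(\gamma_{kj}a_k)=\gamma_{kj}\partial_k(a_k)$. The two sides then agree exactly, using commutativity of the $\partial$'s.

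Completing the induction gives the lemma.
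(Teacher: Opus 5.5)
Your strategy matches the paper's. You reduce to generator pairs, observe that for $\phi_k$ only the pair $(x_j,y_j)$ is non-formal and for $\psi_k$ only $(x_k,y_j)$ is. Your $\psi_k$ computation at $(x_k,y_j)$ is correct and is the paper's computation.

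\textbf{Gap.} The one case that carries content for $\phi_k$, $g=x_j$, is computed incorrectly. You used $\phi_k(x_j)=(\mu_{kj}-\gamma_{kj})x_j$, which is $\partial_k(x_j)$ from Lemma \ref{lem.partial_ext}, not $\phi_k(x_j)$. That produces the coefficient $\mu_{jk}-\mu_{kj}$. No sign choice makes $\pm(\mu_{jk}-\mu_{kj})$ equal to $\gamma_{jk}$ using \eqref{eq.gamma} alone: take all $\mu_{ij}=0$ and $\gamma_{jk}=-\gamma_{kj}=1$. So the assertion that the consistency equation closes this case is false as written. Since you left the bookkeeping explicitly unresolved, the proposal never verifies the crux. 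With the correct value $\phi_k(x_j)=-\mu_{kj}x_j$ from Lemma \ref{lem.alpha_beta}, the computation is
\begin{align*}
\{\phi_k(x_j),y_j\}+\{x_j,\phi_k(y_j)\} &= (-\mu_{kj}+\gamma_{kj}-\mu_{jk})\{x_j,y_j\} = (\mu_{kj}+\mu_{jk}-\gamma_{kj})\,\partial_j(a_j),\\
\phi_k(\{x_j,y_j\}) &= \partial_k\partial_j(a_j) = \partial_j(\gamma_{jk}a_j) = \gamma_{jk}\,\partial_j(a_j).
\end{align*}
These agree exactly by \eqref{eq.gamma}, and this is what the paper does.

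\textbf{Induction.} Your induction is mis-indexed. The statement for $k-1$ concerns $\phi_{k-1}$ and $\psi_{k-1}$. It does not supply your hypothesis that $\phi_k,\psi_k$ are already a Poisson derivation and a Poisson $\phi_k$-derivation of $T_{k-2}$. Induction on $k$ is needed only to know that $T_{k-1}$ exists. The verification itself should fix $k$ and induct on $j=1,\dots,k-1$, adjoining $y_j$ at each step, with Lemma \ref{lem.alpha_beta} as the base case. Your case computations are already written for a general $j<k$, including $x_k$ among the $g$'s, so they carry over unchanged once the $x_j$ case is corrected.
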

\begin{proof}
First we verify the claim for the $\phi_k$. By Lemma \ref{lem.short}, we need only check the Poisson bracket between $y_i$ and $x_i$. Suppose $i \neq k$. Then
\begin{align*}
\{ \phi_k(y_i),x_i\} + \{ y_i, \phi_k(x_i) \}
	&= (\gamma_{ki}-\mu_{ik} - \mu_{ki}) \partial_i(a_i)
	= -\gamma_{ik} \partial_i(a_i) \quad\text{(by \eqref{eq.gamma})} \\
	&= -\partial_i(\partial_k(a_i)) 
	= \phi_k (\partial_i(a_i)) = \phi_k(\{y_i,x_i\}).
\end{align*}
The claim for $\phi_i$ holds with the assumption $\mu_{ii}=0$. This proves the claim for the $\phi_k$. 

It remains to check the claim for the $\psi_i$, wherein it suffices only to consider those brackets involving $x_i$. Suppose $j<i$. Then
\begin{align*}
\{\psi_i(x_i),y_j\} &+ \{ x_i, \psi_i(y_j)\} - \phi_i(x_i)\psi_i(y_j) + \psi_i(x_i)\phi_i(y_j) \\
	&= \{-\partial_i(a_i),y_j\} - (\gamma_{ij}-\mu_{ji}) \partial_i(a_i)y_j \\
	&= \partial_j(\partial_i(a_i))y_j - (\gamma_{ij}-\mu_{ji}) \partial_i(a_i)y_j \\
	&= \gamma_{ij}\partial_i(a_i)y_j - (\gamma_{ij}-\mu_{ji}) \partial_i(a_i)y_j \\
	&= \mu_{ji}\partial_i(a_i)y_j 
	= \psi_i(-\mu_{ji}x_iy_j) = \psi_i( \{x_i,y_j\} ).
\end{align*}
This proves the claim.
\end{proof}

Set $T=T_0[y_1;\phi_1,\psi_1]_P\cdots[y_n;\phi_n,\psi_n]_P$. As an associative algebra, $T=R[x_1,y_1,\hdots,x_n,y_n]$. The Poisson structure defined above may be summarized as
\begin{align*}
&\{r,x_i\} = \partial_i(r)x_i, \quad 
	\{ r,y_i \} = -\partial_i(r)y_i,  \quad
	\{y_i,x_i\} = \partial_i(a_i),
	& &i \in [n], r \in R, \\
&\{y_i,x_j\} = \mu_{ij}x_jy_i, \quad 
\{x_i,x_j\} = (\gamma_{ij}-\mu_{ij}) x_i x_j, \quad
\{y_i,y_j\} = (\mu_{ji}-\gamma_{ij}) y_i y_j,  & &i,j \in [n], i \neq j.
\end{align*}

\begin{theorem}\label{thm.construct}
Keep the above data. Then $A_\mu(R,\bpartial,\ba) \iso T/(y_ix_i-a_i \quad i \in [n])$.
\end{theorem}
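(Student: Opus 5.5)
The plan is to show that the ideal $I=(y_ix_i-a_i \mid i\in[n])$ of $T$ is a Poisson ideal. Then $T/I$ inherits a Poisson bracket. As a commutative algebra, $T/I$ is exactly $R[x_1,y_1,\hdots,x_n,y_n]/(y_1x_1-a_1,\hdots,y_nx_n-a_n)$, and its bracket on generators is the one summarized above. This is precisely the bracket of Definition \ref{defn.tgwpa}. Since a Poisson bracket on a commutative algebra is determined by its values on generators (via the Leibniz rule), the identity map on generators gives the isomorphism $A_\mu(R,\bpartial,\ba)\iso T/I$. In particular, this also establishes that the bracket in Definition \ref{defn.tgwpa} is well defined, i.e.\ that it satisfies the Jacobi identity. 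That comes for free because $T$ is an iterated Poisson Ore extension, built using Lemmas \ref{lem.partial_ext}, \ref{lem.alpha_beta} and \ref{lem.ab_ext}.

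To show $I$ is a Poisson ideal, it suffices by the Leibniz rule to check that $\{g, y_ix_i-a_i\}\in I$ for each generator $g\in R\cup\{x_j,y_j\}$ and each $i$. I would organise the check in four cases, using that each $a_i$ is Poisson central.

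\emph{Case $g=r\in R$.} We get $\{r,y_ix_i\}=-\partial_i(r)y_ix_i+\partial_i(r)y_ix_i=0$, and $\{r,a_i\}=0$.

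\emph{Case $g=x_i$.} We get $\{x_i,y_ix_i\}=-\partial_i(a_i)x_i$. On the other side, $\{x_i,a_i\}=-\{a_i,x_i\}=-\partial_i(a_i)x_i$. The difference is $0$.

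\emph{Case $g=y_i$.} Similarly, $\{y_i,y_ix_i\}=y_i\partial_i(a_i)$ and $\{y_i,a_i\}=\partial_i(a_i)y_i$, so the difference vanishes.

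\emph{Case $g=x_j$ with $j\neq i$.} This is where the consistency equations enter. We have
\[
\{x_j,y_ix_i\}=-\mu_{ij}x_jy_ix_i+(\gamma_{ji}-\mu_{ji})x_jy_ix_i
\quad\text{and}\quad
\{x_j,a_i\}=-\partial_j(a_i)x_j=-\gamma_{ij}a_ix_j .
\]
By \eqref{eq.gamma}, $\gamma_{ji}-\mu_{ji}-\mu_{ij}=-\gamma_{ij}$. Hence the difference equals $-\gamma_{ij}x_j(y_ix_i-a_i)\in I$.

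\emph{Case $g=y_j$ with $j\neq i$.} Analogously, using $\{y_j,y_i\}=(\mu_{ij}-\gamma_{ji})y_jy_i$ and $\{y_j,x_i\}=\mu_{ji}x_iy_j$, the difference is $\gamma_{ij}y_j(y_ix_i-a_i)\in I$.

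The main obstacle is keeping the signs and index orders of $\mu_{ij}$ versus $\mu_{ji}$ straight in the mixed cases $j\neq i$, and confirming that they collapse to multiples of $y_ix_i-a_i$ exactly via \eqref{eq.gamma}. A secondary point to state carefully is that the bracket on $T$ agrees with the displayed summary. This follows from the explicit extensions in Lemmas \ref{lem.partial_ext}--\ref{lem.ab_ext}, with the convention $\mu_{ii}=0$ handling the $\{y_i,x_i\}$ bracket.
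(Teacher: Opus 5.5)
Your proposal is correct and takes essentially the same approach as the paper: show that each $y_ix_i-a_i$ is Poisson normal in $T$ by bracketing it with the generators, with \eqref{eq.gamma} collapsing the mixed cases $j\neq i$ to $\mp\gamma_{ij}$ times $y_ix_i-a_i$. Your computations agree with the paper's up to antisymmetry, and you also check the brackets with $r\in R$, which vanish because $a_i$ is Poisson central and which the paper's proof leaves implicit.
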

\begin{proof}
It suffices to prove that the elements $y_ix_i-a_i$ are Poisson normal in $T$.
Fix $i,j \in [n]$ with $i \neq j$. Then
\begin{align*}
\{ y_ix_i - a_i, x_j\}
	&= \{y_i,x_j\}x_i + \{ x_i,x_j \}y_i - \{a_i,x_j\} \\
	&= \mu_{ij}x_ix_jy_i + (\gamma_{ij}-\mu_{ij})x_ix_jy_i - \partial_j(a_i)x_j
	= \gamma_{ij}(x_iy_i - a_i) x_j \\
\{ y_ix_i - a_i, y_j \} 
	&= \{y_i,y_j\}x_i + \{ x_i,y_j \}y_i - \{a_i,y_j\} \\
	&= (\mu_{ji}-\gamma_{ij})x_iy_iy_j - \mu_{ji} x_iy_iy_j + \partial_j(a_i)
	= -\gamma_{ij}(y_ix_i-a_i)y_j \\
\{ y_ix_i - a_i, x_i\}
	&= \{y_i,x_i\}x_i + \{ x_i,x_i \}y_i - \{a_i,x_i\}
	= \partial_i(a_i)x_i - \partial_i(a_i)x_i = 0 \\
\{ y_ix_i - a_i, y_i\}
	&= \{y_i,y_i \}x_i + \{ x_i,y_i \}y_i - \{a_i,y_i\} 
	= -\partial_i(a_i)y_i + \partial_i(a_i)y_i = 0.	
\end{align*}
This proves the claim.
\end{proof}

\subsection{Connection with Rees rings}
\label{sec.rees}

Here we present an alternate perspective on TGWAs that aligns with the theory of Bell--Rogalski (BR) algebras \cite{BR,GRW1,GRW2}. In particular, Theorem \ref{thm.rees} should be compared to \cite[Theorem 2.8]{GRW2}.

As above, let $R$ be a Poisson algebra and let $a \in R$ be Poisson central.
Then the ring $R[x,y]/(xy-a)$ is isomorphic
to the subring of the Laurent ring $R[t,t\inv]$ generated by $t$ and $at\inv$.
Explicitly, the map is given by the map $x \mapsto t$, $y \mapsto at\inv$.
One may note that this subalgebra is exactly the Rees ring $R[t,I t\inv]$ 
where $I=(a)$ is the principal ideal generated by $a$.

Now let $\partial$ be a Poisson derivation of the Poisson algebra $R$ and consider the skew Poisson Laurent ring $S=R[t^{\pm 1};\partial]_P$. Formally, this is the localization of the Poisson Ore extension $R[t;\partial]$ at powers of $t$ and where $\{r,t\inv\}=-\partial(r)t\inv$ for all $r \in R$ by the quotient rule. The subalgebra of $S$ generated by $t$ and $at\inv$ is isomorphic to the rank one GWPA corresponding to the data $(R,\partial,a)$. 

We extend this idea to TGWPAs. Fix $n \geq 1$ and let $R$ be a Poisson algebra with trivial bracket. For TGWPAs, we first consider the rank $n$ skew-symmetric Poisson algebra $S=R_\bp[\bt^{\pm 1};\bpartial]$ where $\bp$ is an $n \times n$ skew-symmetric matrix over $\kk$. The Poisson bracket is given by 
\[
\{r,t_i^{\pm 1}\} = \pm \partial_i(r) t_i^{\pm 1}, \qquad
\{t_i,t_j^{\pm 1}\}=\pm p_{ij} t_it_j^{\pm 1} \qquad
\text{for all $i,j \in [n]$, $i \neq j$, $r \in R$.}
\]
Let $\ba$ be a set of nonzero Poisson central elements of $R$ and $\bpartial$ be a set of $n$ commuting (Poisson) derivatives of $R$ such that $\partial_j(a_i) = \gamma_{ij} a_i$ for $i \neq j$. Let $S'$ be the subalgebra of $S$ generated by $t_i$ and $a_it_i\inv$ for all $i \in [n]$. 

\begin{theorem}\label{thm.rees}
Keep the above notation and set $p_{ij}=\gamma_{ij}-\mu_{ij}$ for all $i,j \in [n]$, $i\neq j$. 
Then $A_\mu(R,\bpartial,\ba) \iso S'$.
\end{theorem}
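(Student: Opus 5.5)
Define $\Phi\colon R[x_1,y_1,\hdots,x_n,y_n]\to S$ as the $R$-algebra map with $x_i\mapsto t_i$ and $y_i\mapsto a_it_i\inv$. Its image is exactly $S'$, and $\Phi(y_ix_i-a_i)=a_it_i\inv t_i-a_i=0$. So $\Phi$ descends to a surjective algebra map $\bar\Phi\colon A_\mu(R,\bpartial,\ba)\to S'$. Three things remain: $\bar\Phi$ respects brackets, it is injective, and the brackets on the two sides agree on generators.

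\textbf{Bracket compatibility.} Both brackets are biderivations, so it suffices to check the defining relations of Definition~\ref{defn.tgwpa} on generators. This is also consistent with Theorem~\ref{thm.construct}: $\Phi$ then becomes a Poisson map from $T$, and it kills the Poisson ideal generated by the $y_ix_i-a_i$. The checks are as follows.
\begin{itemize}
\item $\{r,t_i\}=\partial_i(r)t_i$ directly.
\item $\{r,a_it_i\inv\}=a_i\{r,t_i\inv\}=-\partial_i(r)a_it_i\inv$, using that $a_i$ is Poisson central and $R$ has trivial bracket.
\item $\{a_it_i\inv,t_i\}=\{a_i,t_i\}t_i\inv=\partial_i(a_i)$.
\item For $i\neq j$: $\{t_i,t_j\}=p_{ij}t_it_j=(\gamma_{ij}-\mu_{ij})t_it_j$.
\item For $i\neq j$:
\[\{a_it_i\inv,t_j\}=\partial_j(a_i)t_i\inv t_j+a_i\{t_i\inv,t_j\}=\gamma_{ij}a_it_i\inv t_j - p_{ij}a_it_i\inv t_j=\mu_{ij}\,t_j(a_it_i\inv).\]
Here $\{t_i\inv,t_j\}=-t_i^{-2}\{t_i,t_j\}=-p_{ij}t_i\inv t_j$.
\item For $i\neq j$, expand $\{a_it_i\inv,a_jt_j\inv\}$ into four terms. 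These are $\{a_i,a_j\}=0$; $a_j\{a_i,t_j\inv\}t_i\inv=-\gamma_{ij}(\cdot)$; $a_i\{t_i\inv,a_j\}t_j\inv=+\gamma_{ji}(\cdot)$; and $a_ia_j\{t_i\inv,t_j\inv\}=p_{ij}(\cdot)$. The total coefficient is $-\gamma_{ij}+\gamma_{ji}+\gamma_{ij}-\mu_{ij}=\gamma_{ji}-\mu_{ij}$. By \eqref{eq.gamma} this equals $\mu_{ji}-\gamma_{ij}$, as required.
\end{itemize}
I expect the sign bookkeeping in this last check to be the only fiddly computation. The skew-symmetry $p_{ji}=-p_{ij}$ is compatible with \eqref{eq.gamma}, so $S$ is well defined.

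\textbf{Injectivity.} This is the main obstacle, and I handle it via the $\ZZ^n$-grading. Both algebras are $\ZZ^n$-graded with $\deg x_i=\deg t_i=\be_i$, and $\bar\Phi$ is graded.
\begin{itemize}
\item In $A$, each element of degree $\balpha$ can be written in the form $r\,x^{\balpha_+}y^{\balpha_-}$ with $r\in R$: any monomial reduces to this form using $y_ix_i=a_i$, so each graded piece is the cyclic $R$-module $R\,x^{\balpha_+}y^{\balpha_-}$.
\item Under $\bar\Phi$ this element maps to $r\,a^{\balpha_-}t^{\balpha}$.
\item Since $S$ is free over $R$ on the monomials $t^{\balpha}$ and the $a_i$ are regular, $r\,a^{\balpha_-}t^{\balpha}=0$ forces $r=0$.
\end{itemize}
Hence $\bar\Phi$ is injective on each graded component, and therefore injective. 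This is where the regularity of the $a_i$ (nonzero in the statement, but regularity is needed) is used.
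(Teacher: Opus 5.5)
Your proof is correct and follows the paper's route: the same map $x_i\mapsto t_i$, $y_i\mapsto a_it_i\inv$, with Poisson compatibility checked on pairs of generators. Your final computation reaches $\mu_{ji}-\gamma_{ij}$ via \eqref{eq.gamma}, which is equivalent to the paper's use of $p_{ji}$ because $\bp$ is skew-symmetric. The only addition is your graded injectivity argument, which the paper simply calls ``well known''; it is sound, and you are right that it needs the $a_i$ to be regular rather than merely nonzero.
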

\begin{proof}
Set $A=A_\mu(R,\bpartial,\ba)$ and define a map $\phi:A \to S'$ by $x_i \mapsto t_i$ and $y_i \mapsto at_i\inv$ for $i \in [n]$, and $\phi(r)=r$ for $r\in R$. It is well known that $\phi$ is an algebra isomorphism. We claim that $\phi$ extends to a Poisson isomorphism. 

For $i,j \in [n]$, $i \neq j$, and $r \in R$, we have
\begin{align*}
\{ \phi(r), \phi(x_i) \} - \phi(\partial_i(r)) \phi(x_i) &= \{ r, t_i\} - \partial_i(r) t_i = 0 \\
\{ \phi(r), \phi(y_i) \} + \phi(\partial_i(r)) \phi(y_i)
	&= \{ r, a_it_i\inv\} + \partial_i(r) a_it_i\inv 
	= a_i (\{r,t_i\inv\} + \partial_i(r) t_i\inv) = 0 \\
\{ \phi(y_i), \phi(x_i) \} - \phi(\partial_i(a_i))
	&= \{ a_it_i\inv,t_i\} - \partial_i(a_i)
	= \{ a_i,t_i\}t_i\inv - \partial_i(a_i)
	=  0 \\
\{ \phi(y_i), \phi(x_j) \} - \mu_{ij} \phi( x_j) \phi( y_i)
	&= \{ a_it_i\inv, t_j\} - \mu_{ij} t_j (a_i t_i\inv) \\
	&= \gamma_{ij}a_i t_i\inv t_j - a_i (p_{ij}t_i\inv t_j) - \mu_{ij} t_j (a_i t_i\inv) \\
	&= a_i(\gamma_{ij} - p_{ij} - \mu_{ij})t_i\inv t_j = 0 \\
\{ \phi(x_i), \phi(x_j) \} - (\gamma_{ij}-\mu_{ij}) \phi( x_i)\phi(  x_j)
	&= \{ t_i, t_j\} - (\gamma_{ij}-\mu_{ij}) t_it_j
	= (p_{ij}-(\gamma_{ij}-\mu_{ij}) )t_it_j = 0 \\
\{ \phi(y_i), \phi(y_j) \} - (\mu_{ji}-\gamma_{ij}) \phi(y_i)\phi(y_j)
	&= \{ a_it_i\inv, a_jt_j\inv \} - (\mu_{ji}-\gamma_{ij}) a_ia_j t_i\inv t_j\inv \\
	&= (-\gamma_{ij} + \gamma_{ji}) a_ia_j t_i\inv t_j\inv + \{t_i\inv,t_j\inv\} a_ia_j 
		 - (\mu_{ji}-\gamma_{ij}) a_ia_j t_i\inv t_j\inv \\
	&= \left( \gamma_{ji} - \mu_{ji} - p_{ji}\right) t_i\inv t_j\inv = 0.
\end{align*}
This proves the result.
\end{proof}

\section{Constructions and examples}
\label{sec.construct}

In this section, we show that the TGWPA structure is preserved under several common constructions including tensor products, Poisson twists, and invariant rings. We also present, as an example, multiparameter Weyl Poisson algebras, which were previously studied by Oh \cite{OH_mppwa}.

\subsection{Tensor products}
Let $(R,\{,\}_R)$ and $(S,\{,\}_S)$ be Poisson algebras. Then $R \otimes S$ is a Poisson algebra where
\[ \{ r_1 \otimes s_1, r_2 \otimes s_2 \}_A = \{ r_1,r_2\}_R \otimes \{s_1,s_2\}_S\]
for all $r_i \otimes s_i \in R \otimes S$.

By \cite[Theorem 2.16]{GR}, the tensor product of two TGWAs is again a TGWA under certain regularity and consistency conditions. See also \cite[Theorem 3.17]{GR3} for a related result on \emph{twisted} tensor products. The corresponding result for TGWPAs is still true, but much easier to establish and we omit the proof.

\begin{proposition}\label{prop.tensor}
Let $A=A_\mu(R,\bpartial,\ba)$ and $A'=A_{\mu'}(R',\bpartial',\ba')$ be TGWPAs of rank $n$ and $m$, respectively. Set $S = R \otimes R'$ and $\nu=\mu \oplus \mu'$. Define $\bnabla = (\nabla_1,\hdots,\nabla_{n+m})$ and $\bb = (b_1,\hdots,b_{n+m})$ by
\[
\nabla_i = \begin{cases}
	\partial_i \otimes 1 & \text{if $1 \leq i \leq n$} \\
	1 \otimes \partial_{i-n}' & \text{if $n+1 \leq i \leq n+m$},
\end{cases} \qquad
b_i = \begin{cases}
	a_i \otimes 1 & \text{if $1 \leq i \leq n$} \\
	1 \otimes a_{i-n}' & \text{if $n+1 \leq i \leq n+m$}.
\end{cases}
\] 
Then $A \otimes A' = A_\nu(S, \bnabla, \bb)$.
\end{proposition}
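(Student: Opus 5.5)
We need to show that the tensor product Poisson algebra $A\otimes A'$, with the standard tensor bracket $\{r_1\otimes s_1, r_2\otimes s_2\} = \{r_1,r_2\}\otimes s_1s_2 + r_1r_2\otimes\{s_1,s_2\}$, is isomorphic to $A_\nu(S,\bnabla,\bb)$. The displayed formula in the paper should be read as this standard one. Here $\nu$ is the block-diagonal $(n+m)\times(n+m)$ matrix with blocks $\mu$ and $\mu'$, and zeros off the diagonal blocks. The plan has two parts. First, check that the data $(S,\bnabla,\bb,\nu)$ satisfy the hypotheses of Definition \ref{defn.tgwpa}. Second, exhibit the isomorphism and check that it respects brackets on generators.

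For the hypotheses, the key facts are these.
\begin{itemize}
\item $\partial_i\otimes 1$ is a Poisson derivation of $S$. This is a direct check on pure tensors, using that $\partial_i$ is a Poisson derivation of $R$.
\item The $\nabla_i$ commute. Two derivations from the same factor commute because the original ones do, and $\partial_i\otimes 1$ trivially commutes with $1\otimes\partial'_j$.
\item Each $b_i$ is Poisson central, since $a_i\otimes 1$ brackets to $\{a_i,r\}\otimes s=0$. Each $b_i$ is regular, using that $R'$ and $R$ are flat over the field $\kk$, so multiplication by $a_i$ stays injective after tensoring.
\item Set $\gamma^S_{ij}=\gamma_{ij}$ or $\gamma'_{ij}$ within the blocks, and $\gamma^S_{ij}=0$ across blocks; this works because $\nabla_{n+j}(a_i\otimes1)=0$ and symmetrically. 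With $\nu_{ij}=0$ across blocks, the consistency equations \eqref{eq.gamma} read $0=0$ across blocks and are inherited within blocks.
\end{itemize}

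For the isomorphism, as commutative algebras we have
\[ A\otimes A' \iso (R\otimes R')[x_1,y_1,\dots,x_n,y_n,x'_1,y'_1,\dots,x'_m,y'_m]/(y_ix_i-a_i,\,y'_jx'_j-a'_j), \]
because tensor products of quotients of polynomial rings are quotients of the tensor product. We send $x_i\mapsto x_i\otimes1$, $x'_j\mapsto 1\otimes x'_j$, and similarly for the $y$'s, with $r\otimes r'\mapsto r\otimes r'$. Both brackets are biderivations, so it suffices to compare them on generators. Brackets with both entries from $A$ (or both from $A'$) agree by construction, using the block values of $\nabla$, $\gamma^S$, and $\nu$. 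Brackets between an $A$-generator and an $A'$-generator vanish in $A\otimes A'$. In $A_\nu$ these brackets are, for example, $\{r\otimes r',x'_j\}$ and $\{x_i,x'_j\}$. The first equals $\nabla_{n+j}(r\otimes r')x'_j$, which matches $\{r\otimes r',1\otimes x'_j\}=r\otimes\{r',x'_j\}$. The second equals $(\gamma^S-\nu)x_ix'_j=0$, which also matches.

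The main obstacle is purely bookkeeping. One must get the cross-block values right (all zero) and confirm the mixed $R$-generator brackets, such as $\{r\otimes 1, x'_j\}=0=\nabla_{n+j}(r\otimes1)x'_j$. One must also handle the regularity claim over a field. Both are routine.
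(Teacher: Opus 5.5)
Your proposal is correct. The paper omits this proof as routine, and your argument is the natural direct verification: you check that $(S,\bnabla,\bb,\nu)$ satisfies the hypotheses of Definition~\ref{defn.tgwpa}, with the cross-block $\gamma$'s and $\nu$'s all zero, and then compare the two biderivations on generators under the evident algebra isomorphism. You are also right to read the tensor-product bracket as $\{r_1,r_2\}\otimes s_1s_2 + r_1r_2\otimes\{s_1,s_2\}$; under the formula as literally displayed in the paper, brackets such as $\{r\otimes 1, x_i\otimes 1\}$ would vanish and the statement would fail.
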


\subsection{Poisson twists}
\label{sec.twist}

First, we recall some background from \cite{TWZ}.

Let $G$ be an abelian group (herein we will typically use $G=\ZZ^n$) and let $(A, \{\})$ be a G-graded Poisson algebra. A set $\delta=\{ \delta_g \mid g \in G \}$ of graded Poisson derivations of $A$. Then $\delta$ is a \emph{Poisson twisting system} if $\delta_g\delta_h = \delta_h\delta_g$ for all $g,h \in G$ and $\delta_{|ab|} = \delta_{|a|}+\delta_{|b|}$ for all homogeneous elements $a,b \in A$. Here $|a|=\deg(a)$. We note that this definition omits condition (3) from \cite[Definition 2.1]{TWZ} because we have assumed that the $\delta_g$ are Poisson derivations. Given these data, one defines a new Poisson bracket on $A$ by
\[ \langle a,b \rangle = \{a,b\} + a\delta_{|a|}(b) - b\delta_{|b|}(a)\]
for all homogeneous elements $a,b \in A$. Then $(A,\langle \rangle)$ is a Poisson algebra by \cite[Theorem 2.4]{TWZ}, which we denote by $A^\delta$.

Now let $A=A_\mu(R,\bpartial,\ba)$ be a TGWPA of rank $n$. We keep the standard $\ZZ^n$-grading on $A$ as explained in the introduction. 

\begin{lemma}\label{lem.twist}
Fix $k \in [n]$ and let $q_{kj},p_{kj} \in \kk^\times$ for all $j \in [n]$. Let $\delta_{\be_k}$ be a derivation of $R[x_1,y_1,\hdots,x_n,y_n]$ satisfying
\begin{enumerate}
\item $\delta_{\be_k}(x_j) = q_{ki} x_j$ and $\delta_{\be_k}(y_j) = p_{kj} y_j$,
\item $\delta_{\be_k}(a_j) = (q_{kj} + p_{kj})a_j$, and
\item $\partial_j \delta_{\be_k} = \delta_{\be_k}\partial_j$,
\end{enumerate}
for all $j \in [n]$. Then $\delta_{\be_k}$ descends to a Poisson derivation of $A$.
\end{lemma}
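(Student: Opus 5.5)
The plan is to split the claim into two parts. First, $\delta_{\be_k}$ preserves the defining ideal of $A$, so it induces a derivation of $A$. Second, this induced derivation is compatible with the bracket of $A$.

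(Here $q_{ki}x_j$ in condition (1) should read $q_{kj}x_j$. I also take $\delta_{\be_k}|_R$ to be a Poisson derivation of $R$; this is implicit in the setup, where the $\delta_g$ are Poisson derivations, and it holds automatically if $R$ has trivial bracket, as in Section \ref{sec.rees}.)

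For the first part, conditions (1) and (2) give
\[ \delta_{\be_k}(y_jx_j-a_j) = (p_{kj}+q_{kj})y_jx_j - (q_{kj}+p_{kj})a_j = (q_{kj}+p_{kj})(y_jx_j-a_j).\]
So $\delta_{\be_k}$ maps each generator $y_jx_j-a_j$ into the ideal they generate. Since $\delta_{\be_k}$ is a derivation, it then preserves the whole ideal and descends to a derivation of $A$.

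For the second part, I would use the standard reduction to generators. Define
\[ D(u,v)=\delta_{\be_k}(\{u,v\})-\{\delta_{\be_k}(u),v\}-\{u,\delta_{\be_k}(v)\}. \]
By the Leibniz rule for $\{,\}$ and for $\delta_{\be_k}$, $D$ is antisymmetric and is a derivation in each argument. Hence $D$ vanishes on $A$ as soon as it vanishes on pairs of generators drawn from $R\cup\{x_j,y_j : j\in[n]\}$. I would then check the generator pairs case by case.

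\begin{itemize}
\item For $r,s\in R$, $D(r,s)=0$ is the assumption that $\delta_{\be_k}|_R$ is Poisson.
\item For $(r,x_j)$, the left side is $\delta_{\be_k}(\partial_j(r)x_j) = \delta_{\be_k}\partial_j(r)x_j + q_{kj}\partial_j(r)x_j$. The right side is $\partial_j(\delta_{\be_k}(r))x_j + q_{kj}\partial_j(r)x_j$. These agree by condition (3). The pair $(r,y_j)$ is identical with $p_{kj}$ in place of $q_{kj}$ and a sign.
\item For $(y_j,x_j)$, condition (3) and then condition (2) give $\delta_{\be_k}(\partial_j(a_j))=\partial_j(\delta_{\be_k}(a_j))=(q_{kj}+p_{kj})\partial_j(a_j)$. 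This matches $\{p_{kj}y_j,x_j\}+\{y_j,q_{kj}x_j\}$.
\item For pairs $x_i,x_j$, $y_i,x_j$, $y_i,y_j$ with $i\neq j$, the bracket is a scalar times the product of the two generators. Each generator is an eigenvector of $\delta_{\be_k}$, and $\delta_{\be_k}$ acts on the product by the sum of the two eigenvalues. So both sides equal that sum times the bracket, exactly as in the second computation of Lemma \ref{lem.short}.
\end{itemize}

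I do not expect any step to be hard; the one point needing care is the pair $(y_j,x_j)$. It is the only place where condition (2), linking the eigenvalue on $a_j$ to those on $x_j$ and $y_j$, is combined with the commutation condition (3). A second subtlety is that the argument needs $\delta_{\be_k}$ to be Poisson on $R$ itself, which I would state explicitly.
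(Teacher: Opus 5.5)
Your proposal is correct and follows the same route as the paper. Like the paper, you show that $\delta_{\be_k}$ preserves the ideal generated by the $y_jx_j-a_j$, and then check the bracket pair by pair on generators: (3) handles $(r,x_j)$ and $(r,y_j)$, (2) with (3) handles $(y_j,x_j)$, and additivity of eigenvalues handles the remaining pairs.

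Your explicit biderivation reduction and your added hypothesis that $\delta_{\be_k}$ restricts to a Poisson derivation of $R$ go beyond the paper's argument. The paper silently skips the pair $(r,s)$, and the statement fails without this hypothesis. For example, take $R=\kk[u,v]$ with $\{u,v\}=1$, all $\partial_j=0$, $a_j=1$, $p_{kj}=-q_{kj}$, and $\delta_{\be_k}(u)=uv$, $\delta_{\be_k}(v)=0$. Then $\delta_{\be_k}(\{u,v\})=0$, while $\{uv,v\}=v$.
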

\begin{proof}
First we show that $\delta_{\be_k}$ preserves $y_jx_j-a_j$ for all $j \in [n]$. We have
\[
\delta_{\be_k}(y_j)x_j + y_j\delta_{\be_k}(x_j) - \delta_{\be_k}(a_j)
	= (p_{kj} + q_{kj})a_{kj} - \delta_{\be_k}(a_j) = 0.
\]
Hence, $\delta_{\be_k}$ defines an algebra derivation of $A$. We now show that $\delta_{\be_k}$ preserves the Poisson bracket. Fix $r \in R$ and $i,j \in [n]$ with $i \neq j$. Then
\begin{align*}
\{ \delta_{\be_k}(r), x_j \} + \{r, \delta_{\be_k}(x_j) \}
	&= \partial_j(\delta_{\be_k}(r))x_j + q_{kj} \partial_j(r)x_j
	= \delta_{\be_k}( \partial_j(r) ) x_j + \partial_j(r) \delta_{\be_k}(x_j)
	= \delta_{\be_k}( \partial_j(r)x_j) \\
\{ \delta_{\be_k}(r), y_j \} + \{r, \delta_{\be_k}(y_j) \}
	&= -\partial_j(\delta_{\be_k}(r))y_j - p_{kj} \partial_j(r)y_j
	= -\delta_{\be_k}( \partial_j(r) ) y_j - \partial_j(r) \delta_{\be_k}(y_j)
	= \delta_{\be_k}( -\partial_j(r)y_j) \\
\{ \delta_{\be_k}(y_j), x_j\} + \{ y_j, \delta_{\be_k}(x_j) \}
	&= (p_{kj} + q_{kj}) \partial_j(a_j) 
	= \partial_j( (p_{kj} + q_{kj})a_j)
	= \partial_j (\delta_{\be_k}(a_j))
	= \delta_{\be_k}(\partial_j(a_j)) \\
\{\delta_{\be_k}(y_i),x_j\} + \{y_i,\delta_{\be_k}(x_j)\}
	&= \{ p_{ki}y_i,x_j\} + \{y_i, p_{kj} x_j\}
	= \mu_{ij} (p_{ki}y_ix_j + p_{kj}y_i x_j) \\
	&= \mu_{ij} (\delta_{\be_k}(y_i)x_j + y_i\delta_{\be_k}(x_j)) 
	= \delta_{\be_k}(\mu_{ij}y_ix_j) \\
\{\delta_{\be_k}(x_i),x_j\} + \{x_i,\delta_{\be_k}(x_j)\}
	&= \{ q_{ki}x_i,x_j \} + \{ x_i, q_{kj}x_j\}
	= (\gamma_{ij} - \mu_{ij}) (q_{ki}x_ix_j + q_{kj}x_ix_j) \\
	&= (\gamma_{ij} - \mu_{ij}) (\delta_{\be_k}(x_i)x_j + x_i\delta_{\be_k}(x_j)) 
	= \delta_{\be_k}( (\gamma_{ij} - \mu_{ij})x_ix_j) \\
\{\delta_{\be_k}(y_i),y_j\} + \{y_i,\delta_{\be_k}(y_j)\}
	&= \{ p_{ki}y_i,y_j \} + \{ y_i, p_{kj}y_j\}
	= (\mu_{ji}-\gamma_{ij}) (p_{ki}y_iy_j + p_{kj}y_iy_j) \\
	&= (\mu_{ji}-\gamma_{ij})(\delta_{\be_k}(y_i)y_j + y_i\delta_{\be_k}(y_j))
	= \delta_{\be_k}( (\mu_{ji}-\gamma_{ij})y_iy_j).
\end{align*}
This proves the result.
\end{proof}

For all $k \in [n]$, we define $\delta_{-\be_k} := -\delta_{\be_k}$. For $\balpha \in \ZZ^n$, set 
\[ \delta_{\balpha} = \sum_{i=0}^n \alpha_i \delta_{\be_i}.\]
Then for $\balpha,\bbeta \in \ZZ^n$, $\delta_{\balpha + \bbeta} = \delta_{\balpha}+\delta_{\bbeta}$. It follows immediately that for $j \in [n]$,
\begin{align*}
\delta_{\bzero}(x_j) &= \delta_{\be_i+(-\be_i)}(x_j) = \delta_{\be_i}(x_j) + \delta_{-\be_i}(x_j)
	= q_{ij} x_j - q_{ij} x_j = 0 \\
\delta_{\bzero}(x_y) &= \delta_{\be_i+(-\be_i)}(y_j) = \delta_{\be_i}(y_j) + \delta_{-\be_i}(y_j)
	= p_{ij} x_j - p_{ij} x_j = 0.
\end{align*}
That is, $\delta_{\bzero}(x_j)= \delta_{\bzero}(y_j) = 0$. We assume further that $\delta_{\bzero}(r)=0$ for all $r \in R$ (so $\delta_{\bzero}=0$). Then $\delta = \{ \delta_{\balpha} \mid \balpha \in \ZZ^n\}$ is a Poisson twisting system.

\begin{theorem}\label{thm.twist}
Let $A$ and $\delta$ be defined as above.
Set $\eta_{ij}=\gamma_{ij}-q_{ji}-p_{ji}$ and $\nu_{ij} = \mu_{ij} - q_{ij} - p_{ji}$. 
Let $\bnabla=(\nabla_1,\hdots,\nabla_n)$ where $\nabla_i = \partial_i - \delta_{\be_i}$ for $i \in [n]$.
Then $A^\delta \iso A_\nu(R,\bnabla,\ba)$ as Poisson algebras.
\end{theorem}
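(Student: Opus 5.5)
The plan is to take the identity map on the underlying commutative algebra $R[x_1,y_1,\hdots,x_n,y_n]/(y_ix_i-a_i)$ and show that it is a Poisson isomorphism $A^\delta \to A_\nu(R,\bnabla,\ba)$. Since both brackets are biderivations, it is enough to compare them on generators, namely on elements of $R$ and on the $x_i,y_i$. Before doing that, I will check that $(R,\bnabla,\ba)$ together with $\nu$ and $\eta$ is legitimate TGWPA data in the sense of Definition \ref{defn.tgwpa}.

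\emph{Data checks.} The twisting system comes from Lemma \ref{lem.twist}, and each $\delta_{\be_i}$ there is a graded Poisson derivation of degree $\bzero$. Hence $\delta_{\be_i}$ preserves $R = A_{\bzero}$ and restricts to a Poisson derivation of $R$. It follows that each $\nabla_i=\partial_i-\delta_{\be_i}$ is a Poisson derivation of $R$.

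The $\nabla_i$ commute with one another. This is because the $\partial_j$ commute among themselves, the $\delta$'s commute among themselves (twisting system axiom), and $\partial_j\delta_{\be_k}=\delta_{\be_k}\partial_j$ by hypothesis (3) of Lemma \ref{lem.twist}.

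On $R$ the twisted bracket $\langle r,s\rangle$ equals $\{r,s\}$, because $|r|=|s|=\bzero$ and $\delta_{\bzero}=0$. So the $a_i$ remain Poisson central and regular. By hypothesis (2) of Lemma \ref{lem.twist},
\[ \nabla_j(a_i)=\gamma_{ij}a_i-(q_{ji}+p_{ji})a_i=\eta_{ij}a_i. \]
Finally, the consistency equation \eqref{eq.gamma} for $(\eta,\nu)$ follows by direct substitution: the terms $q_{ij},q_{ji},p_{ij},p_{ji}$ all cancel, and what remains is $\gamma_{ij}+\gamma_{ji}=\mu_{ij}+\mu_{ji}$.

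\emph{Generator computations.} I will use $\langle a,b\rangle=\{a,b\}+a\delta_{|a|}(b)-b\delta_{|b|}(a)$ with $|r|=\bzero$, $|x_i|=\be_i$, $|y_i|=-\be_i$ and $\delta_{-\be_i}=-\delta_{\be_i}$. This gives:
\begin{align*}
\langle r,x_i\rangle &= \nabla_i(r)x_i, \qquad \langle r,y_i\rangle=-\nabla_i(r)y_i,\\
\langle y_i,x_i\rangle &= \partial_i(a_i)-(q_{ii}+p_{ii})a_i=\nabla_i(a_i),\\
\langle y_i,x_j\rangle &= (\mu_{ij}-q_{ij}-p_{ji})x_jy_i=\nu_{ij}x_jy_i,\\
\langle x_i,x_j\rangle &= (\gamma_{ij}-\mu_{ij}+q_{ij}-q_{ji})x_ix_j=(\eta_{ij}-\nu_{ij})x_ix_j,\\
\langle y_i,y_j\rangle &= (\mu_{ji}-\gamma_{ij}+p_{ji}-p_{ij})y_iy_j=(\nu_{ji}-\eta_{ij})y_iy_j.
\end{align*}
The computation of $\langle y_i,x_i\rangle$ uses $y_ix_i=a_i$ and hypothesis (2) of Lemma \ref{lem.twist}. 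These are exactly the defining brackets of $A_\nu(R,\bnabla,\ba)$ with $\eta$ playing the role of $\gamma$.

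The main obstacle is bookkeeping rather than any conceptual step. One has to get the signs of $\delta_{-\be_i}$ right. One also has to track the index conventions: $q_{ij}$ versus $q_{ji}$, and the typo $q_{ki}$ for $q_{kj}$ in Lemma \ref{lem.twist}. Matching $\langle x_i,x_j\rangle$ and $\langle y_i,y_j\rangle$ against $\eta-\nu$ is where an error is most likely, so I would verify those two carefully term by term.
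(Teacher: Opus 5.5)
Your proposal is correct and follows the same route as the paper. Both identify the underlying algebras, compare $\langle\,,\rangle$ with the prescribed TGWPA brackets on $R$, $x_i$, $y_i$, and check the consistency equation for $(\eta,\nu)$; your generator computations agree with the paper's. Your extra data checks, that the $\nabla_i$ are commuting Poisson derivations of $R$ and that $\nabla_j(a_i)=\eta_{ij}a_i$ via hypothesis (2) of Lemma \ref{lem.twist}, spell out what the paper leaves implicit, namely why $\eta$ is the correct replacement for $\gamma$.
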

\begin{proof}
By definition, $A^\delta \iso A_\nu(R,\bnabla,\ba)$ as algebras. We need only show that the bracket $\langle,\rangle$ on $A^{\delta}$ aligns with that on $A_\nu(R,\bnabla,\ba)$. Let $r,s \in R$ and $i,j \in [n]$ with $i \neq j$. We have
\begin{align*}
\langle r,s \rangle
	&= \{r,s\} + r\delta_{\bzero}(s) - s\delta_{\bzero}(r)
	= \{r,s\} \\
\langle r,x_i \rangle 
	&= \{r,x_i\} + r\delta_{\bzero}(x_i) - x_i\delta_{\be_i}(r)
	= \left( \partial_i(r) - \delta_{\be_i}(r) \right) x_i 
	= \nabla_i(r) x_i \\
\langle r,y_i \rangle 
	&= \{r,y_i\} + r\delta_{\bzero}(y_i) - y_i\delta_{-\be_i}(r)
	= \left( -\partial_i(r) + \delta_{\be_i}(r) \right) y_i 
	= -\nabla_i(r) y_i \\
\langle y_i,x_i \rangle 
	&= \{y_i,x_i\} + y_i\delta_{-\be_i}(x_i) - x_i\delta_{\be_i}(y_i) 
	= \partial_i(a_i) - (q_{ii} + p_{ii}) a_i 
	= \nabla_i(a_i) \\
\langle x_i,x_j \rangle &= \{x_i,x_j\} + x_i\delta_{\be_i}(x_j) - x_j\delta_{\be_j}(x_i)
	= \left( (\gamma_{ij}-\mu_{ij}) + q_{ij} - q_{ji} \right) x_i x_j
	= (\eta_{ij}-\nu_{ij})x_ix_j \\
\langle y_i,y_j \rangle &= \{y_i,y_j\} + y_i\delta_{-\be_i}(y_j) - y_j\delta_{-\be_j}(y_i)
	= \left( (\mu_{ji}-\gamma_{ij}) - p_{ij} + p_{ji} \right) y_i y_j
	= (\nu_{ji}-\eta_{ij}) y_iy_j \\
\langle y_i,x_j \rangle &= \{y_i,x_j\} + y_i\delta_{-\be_i}(x_j) - x_j\delta_{\be_j}(y_i)
	= \left( \mu_{ij} - q_{ij} - p_{ji} \right) y_i x_j 
	= \nu_{ij} y_ix_j.
\end{align*}
Finally, we observe that the parameters satisfy the consistency condition \eqref{eq.gamma}:
\begin{align*}
\eta_{ij} + \eta_{ji} 
	&= (\gamma_{ij}-q_{ji}-p_{ji}) + (\gamma_{ji}-q_{ij}-p_{ij}) \\
	&= (\gamma_{ij} + \gamma_{ji}) - (q_{ij} + q_{ji} + p_{ij} + p_{ji}) \\
	&= (\mu_{ij} + \mu_{ji})- (q_{ij} + q_{ji} + p_{ij} + p_{ji}) \\
	&= (\mu_{ij} - q_{ij} - p_{ji}) + (\mu_{ji} - q_{ji} - p_{ij}) \\
	&= \nu_{ij} + \nu_{ji}.
\end{align*}
This proves the result.
\end{proof}

\begin{corollary}\label{cor.twist}
Let $A=A_\mu(R,\bpartial,\ba)$ be as above, let $\delta$ be defined as in Lemma \ref{lem.twist}, and let $A'=A_\nu(R,\bpartial,\ba)$. Then $A^\delta \iso A'$ if and only if $q_{kj}+p_{kj}=0$ for all $j,k \in [n]$. Consequently, if $\mu_{ij}+\mu_{ji}=0$ for all $i,j$, then $A$ is a twist of GWPA.
\end{corollary}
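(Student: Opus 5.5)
Fix the data $(q_{kj},p_{kj})$ and the derivations $\delta_{\be_k}$ of Lemma \ref{lem.twist}, and compare the bracket on $A^\delta$ with that on $A'=A_\nu(R,\bpartial,\ba)$. Both are the algebra $R[x_1,y_1,\hdots,x_n,y_n]/(y_ix_i-a_i)$, and a Poisson bracket on it is determined by its values on $R$ and on the generators $x_i,y_i$. The natural identification is the identity on generators. By Theorem \ref{thm.twist}, $A^\delta$ carries the TGWPA bracket for $(R,\bnabla,\ba)$ with parameters $\nu$, where $\nabla_i=\partial_i-\delta_{\be_i}$. So the two brackets agree on all pairs $(x_i,x_j)$, $(y_i,y_j)$, $(y_i,x_j)$ with $i\neq j$. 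They can only differ in $\langle r,x_i\rangle$, $\langle r,y_i\rangle$ and $\langle y_i,x_i\rangle$.

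For sufficiency, assume $q_{kj}+p_{kj}=0$ for all $j,k$. Then condition (2) of Lemma \ref{lem.twist} gives $\delta_{\be_k}(a_j)=0$. The statement implicitly requires $\delta_{\be_k}$ to vanish on $R$ (I would make this hypothesis explicit), so $\nabla_i=\partial_i$ on $R$. Hence $\langle r,x_i\rangle=\partial_i(r)x_i$ and $\langle r,y_i\rangle=-\partial_i(r)y_i$. Also $\langle y_i,x_i\rangle=\partial_i(a_i)-(q_{ii}+p_{ii})a_i=\partial_i(a_i)$. Thus the identity map is a Poisson isomorphism.

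For necessity, I would compare the degree-zero relation $\langle y_i,x_i\rangle=\partial_i(a_i)-(q_{ii}+p_{ii})a_i$ with $\partial_i(a_i)$, and $\langle r,x_i\rangle$ with $\partial_i(r)x_i$. Since $a_i$ is regular, and under the identity identification, this forces $q_{ii}+p_{ii}=0$ and $\delta_{\be_i}\restrict{R}=0$. Then condition (2) gives $(q_{kj}+p_{kj})a_j=\delta_{\be_k}(a_j)=0$, and regularity of $a_j$ yields $q_{kj}+p_{kj}=0$.

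The main obstacle is that "$\iso$" might allow isomorphisms other than the identity on generators. I would restrict to $\ZZ^n$-graded isomorphisms fixing $R$ and scaling the generators, which is the setting the corollary intends. A scaling $x_i\mapsto c_ix_i$, $y_i\mapsto c_i\inv y_i$ (forced by $y_ix_i=a_i$) leaves every bracket coefficient unchanged. So the comparison above is rigid, and the argument goes through.

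For the consequence, suppose $\mu_{ij}+\mu_{ji}=0$. The goal is to choose scalars so that $\nu_{ij}=0$ and $\eta_{ij}=0$, making $A$ a twist of a GWPA. Equivalently, I would run the construction backwards: start from the GWPA $A_0(R,\bpartial,\ba)$ and twist it by $\delta$ with $q_{kj}=-p_{kj}$, $\delta\restrict{R}=0$, and $q_{ij}$ chosen so that the resulting $\mu$-parameters are $-(q_{ij}+p_{ji})=q_{ji}-q_{ij}$. Choose $q_{ij}=-\mu_{ij}/2$ when $\operatorname{char}\kk\neq 2$, or more generally $q_{ij}=-\mu_{ij}$ for $i<j$ and $0$ otherwise. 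Then $q_{ji}-q_{ij}=\mu_{ij}$, using $\mu_{ji}=-\mu_{ij}$. The $\gamma$'s stay $0$ on the GWPA side; on the twisted side $\gamma_{ij}$ equals $\partial_j(a_i)/a_i$, which is unchanged. I would check that the consistency condition is consistent with this, namely that $\gamma_{ij}+\gamma_{ji}=0$ follows from $\mu_{ij}+\mu_{ji}=0$. The twist of $A_0$ then has exactly the brackets of $A$, so $A$ is a twist of a GWPA.
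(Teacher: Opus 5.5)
For the biconditional, your route is the paper's. You identify $A^\delta$ with $A_\nu(R,\bnabla,\ba)$ by Theorem~\ref{thm.twist}, compare $\langle r,x_i\rangle$ with $\partial_i(r)x_i$ under the identity identification to get $\delta_{\be_i}|_R=0$, and then apply Lemma~\ref{lem.twist}(2) and regularity of the $a_j$.

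The two hypotheses you make explicit are used tacitly by the paper too, and they are genuinely needed. One is that the isomorphism fixes $R$ and rescales the generators; the other is that $\delta_{\be_k}|_R=0$ in the converse. For example, take rank one with $R=\kk[h]$, $a=h$, $\partial=h\frac{d}{dh}$, $\delta_{\be_1}=2h\frac{d}{dh}$ and $q_{11}=p_{11}=1$. The twist has $\langle y,x\rangle=-xy$ and is Poisson isomorphic to $A'=A$ via $x\leftrightarrow y$, although $q_{11}+p_{11}=2\neq 0$ when $2\neq 0$ in $\kk$.

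One inaccuracy: $A^\delta$ and $A'$ need not agree on the pairs $(x_i,x_j)$ and $(y_i,y_j)$. In $A^\delta$ the coefficients are $\eta_{ij}-\nu_{ij}$ and $\nu_{ji}-\eta_{ij}$, with $\eta_{ij}=\gamma_{ij}-q_{ji}-p_{ji}$. They match those of $A'$ only once $\delta_{\be_j}(a_i)=0$. Your hypotheses guarantee this, so the biconditional is unaffected.

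The genuine gap is in the final sentence. A twist with $\delta_{\be_k}|_R=0$ leaves $\bpartial$ unchanged, and hence leaves $\gamma_{ij}=\partial_j(a_i)/a_i$ unchanged; this is exactly the paper's observation $\eta_{ij}=\gamma_{ij}$. So your target $\eta_{ij}=0$ is unreachable unless $\gamma_{ij}=0$ already, and $A_0(R,\bpartial,\ba)$ is a GWPA only when $\partial_j(a_i)=0$ for all $i\neq j$. The check you propose, that $\gamma_{ij}+\gamma_{ji}=0$ follows from $\mu_{ij}+\mu_{ji}=0$, is true by \eqref{eq.gamma} but beside the point, because a skew-symmetric $\gamma$ need not vanish.

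For instance, take $R=\kk[h_1,h_2]$ with trivial bracket, $a_i=h_i$, $\partial_1=h_1\frac{\partial}{\partial h_1}-h_2\frac{\partial}{\partial h_2}$, $\partial_2=h_1\frac{\partial}{\partial h_1}+h_2\frac{\partial}{\partial h_2}$ and $\mu=0$. Then $\gamma_{12}=1=-\gamma_{21}$, and no twist of the kind you use produces this $A$ from a GWPA. As written, your argument proves the last sentence only under the extra hypothesis $\partial_j(a_i)=0$ for $i\neq j$.

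Without that hypothesis you must allow $\delta_{\be_k}|_R\neq 0$, since $\gamma$ changes only through $\delta_{\be_j}(a_i)=(q_{ji}+p_{ji})a_i$. That requires producing pairwise commuting Poisson derivations of $R$ with three properties: they commute with the $\partial_i$, satisfy $\delta_{\be_k}(a_j)\in\kk a_j$, and satisfy $\delta_{\be_j}(a_i)=-\gamma_{ij}a_i$ for $i\neq j$. In the example above this is possible: start from $\partial^0_i=h_i\frac{\partial}{\partial h_i}$ and twist by $\delta_{\be_1}=h_2\frac{\partial}{\partial h_2}$, $\delta_{\be_2}=-h_1\frac{\partial}{\partial h_1}$ with suitable scalars. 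Nothing in the setup guarantees such derivations in general. The paper's proof only records $\eta_{ij}=\gamma_{ij}$ and $\nu_{ij}+\nu_{ji}=\mu_{ij}+\mu_{ji}$, so it does not supply this step either.
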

\begin{proof}
Suppose $A^\delta \iso A'$. The hypothesis implies that $\delta_{\be_k}(r)=0$ for all $r \in R$, so by Lemma \ref{lem.twist}~(2), $q_{kj}+p_{kj}=0$. Thus, $\eta_{ij}=\gamma_{ij}$ for all $i,j$ and 
\[ \nu_{ij} + \nu_{ji} = (\mu_{ij} - q_{ij} - p_{ji}) + (\mu_{ji} - q_{ji} - p_{ij}) = \mu_{ij} + \mu_{ji}.\]
The converse is clear.
\end{proof}

\subsection{Invariants}
\label{sec.invariants}

In this section we study invariants of TGWPAs under diagonal automorphisms. We prove a version of a theorem of Jordan and Wells in the case of rank one GWAs \cite[Theorem 2.6]{JW}. The theorem below is a Poisson analogue of \cite[Theorem 3.10]{GR} in the case of type TGWAs of type $(A_1)^n$. The question of invariants for various generalizations of Weyl algebras has been considered in a variety of works \cite{GHo,GRW1,GRW2,GW1}.

\begin{lemma}
Let $A=A_\mu(R,\bpartial,\ba)$ be a TGWPA of rank $n$.
Let $\psi$ be an automorphism of $R$ such that $\psi(a_i)=a_i$ 
and let $\alpha_i \in \kk^\times$ for $i \in [n]$. Define an map $\phi:A \to A$ by
\[ \phi\restrict{R} = \psi, \quad \phi(x_i)=\alpha_i x_i, \quad \phi(y_i) = \alpha_i\inv y_i.\]
Then $\phi$ is a $\ZZ^n$-graded Poisson automorphism of $A$.
\end{lemma}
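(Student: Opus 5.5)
The plan is to define $\phi$ first on the polynomial ring $T=R[x_1,y_1,\hdots,x_n,y_n]$, show that it descends to $A$, and then check the bracket on generators. Since the argument must use that $\psi$ respects the Poisson structure of $R$ and the derivations $\bpartial$, I will read ``automorphism of $R$'' as a Poisson automorphism satisfying $\psi\partial_i=\partial_i\psi$ for all $i$. This is the setting in which diagonal automorphisms are used in Section~\ref{sec.invariants}. Without these two conditions the bracket identities $\{r,x_i\}=\partial_i(r)x_i$ cannot be preserved, so I would state them explicitly as standing hypotheses.

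First, $\psi$ together with $x_i\mapsto\alpha_ix_i$ and $y_i\mapsto\alpha_i\inv y_i$ extends uniquely to a $\kk$-algebra endomorphism $\tilde\phi$ of $T$, because $T$ is a polynomial ring over $R$. We have
\[
\tilde\phi(y_ix_i-a_i)=\alpha_i\inv\alpha_i y_ix_i-\psi(a_i)=y_ix_i-a_i,
\]
so $\tilde\phi$ preserves the ideal $(y_ix_i-a_i : i\in[n])$ and descends to an algebra endomorphism $\phi$ of $A$. The same construction with $\psi\inv$ and $\alpha_i\inv$ gives a two-sided inverse, so $\phi$ is an algebra automorphism. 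Each generator is sent to a scalar multiple, or an $R$-image, of itself of the same degree, so $\phi$ maps $A_{\balpha}$ into $A_{\balpha}$ and is $\ZZ^n$-graded.

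It remains to show $\phi(\{u,v\})=\{\phi(u),\phi(v)\}$. Both sides are biderivations in $(u,v)$ after composing with $\phi$, so it suffices to check this on pairs of generators from $R\cup\{x_i,y_i\}$. The checks are as follows.
\begin{itemize}
\item For $r,s\in R$, the identity is the Poisson property of $\psi$.
\item For $r\in R$ and $x_i$:
\[
\{\psi(r),\alpha_ix_i\}=\alpha_i\partial_i(\psi(r))x_i=\alpha_i\psi(\partial_i(r))x_i=\phi(\partial_i(r)x_i),
\]
and the computation for $y_i$ is the same with $\alpha_i\inv$.
\item For $y_i$ and $x_i$: $\{\alpha_i\inv y_i,\alpha_ix_i\}=\partial_i(a_i)=\gamma_{ii}a_i$. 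Since $\psi$ commutes with $\partial_i$ and fixes $a_i$, this equals $\psi(\partial_i(a_i))$.
\item For $i\ne j$, the brackets $\{y_i,x_j\}$, $\{x_i,x_j\}$ and $\{y_i,y_j\}$ are scalar multiples of the corresponding monomials. Both sides acquire the same factor ($\alpha_i\inv\alpha_j$, $\alpha_i\alpha_j$, $\alpha_i\inv\alpha_j\inv$ respectively), so they agree.
\end{itemize}

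The only step I expect to need care is the consistency in the third and first cases, namely that $\psi$ commutes with the $\partial_i$ and fixes $a_i$. This is exactly where the extra hypothesis is needed. Everything else is a routine monomial scaling computation.
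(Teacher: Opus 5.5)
The paper states this lemma without proof, so there is no argument of the author's to compare yours against. Your verification is the natural one and it is correct: extend to $T$, note that $y_ix_i-a_i$ is fixed, invert using $\psi\inv$ and $\alpha_i\inv$, and then compare the two biderivations along $\phi$, $(u,v)\mapsto\phi(\{u,v\})$ and $(u,v)\mapsto\{\phi(u),\phi(v)\}$, on generators.

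Your extra hypotheses are not just convenient but necessary. The statement as printed is false even when $\psi$ is a Poisson automorphism, because commutation with the $\partial_i$ is missing. Since the monomials $z^{\balpha}$ form an $R$-basis of $A$, the identity $\phi(\{r,x_i\})=\{\phi(r),\phi(x_i)\}$ is equivalent to $\psi(\partial_i(r))=\partial_i(\psi(r))$.

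For a concrete counterexample, take $R=\kk[h,s]$ with trivial bracket, $n=1$, $\partial_1=\frac{\partial}{\partial h}$, $a_1=h$, $\alpha_1=1$, and set $\psi(h)=h$, $\psi(s)=s+h$. Then $\psi$ is a Poisson automorphism fixing $a_1$. However, $\{\phi(s),\phi(x_1)\}=\{s+h,x_1\}=x_1$ while $\phi(\{s,x_1\})=0$. Likewise $\psi$ must be Poisson on $R$, since $R$ is a Poisson subalgebra of $A$.

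One correction: in the $\{y_i,x_i\}$ case you write $\partial_i(a_i)=\gamma_{ii}a_i$. That is not among the hypotheses, since $\partial_j(a_i)=\gamma_{ij}a_i$ is only imposed for $i\neq j$. It also fails in the basic examples: for the Weyl Poisson algebra $\partial_i(a_i)=1$, and the simplicity criterion $Ra_i+R\partial_i(a_i)=R$ would otherwise force $a_i$ to be a unit. Delete that equality. The identity $\psi(\partial_i(a_i))=\partial_i(\psi(a_i))=\partial_i(a_i)$, which is the justification you actually give, is all that is needed.
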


Let $A=A_\mu(R,\bpartial,\ba)$ be a TGWPA of rank $n$. 
\begin{enumerate}
\item $\phi\restrict{R}$ is an automorphism of $R$ with $\ell=\ord\left(\phi\restrict{R}\right) < \infty$,
\item for each $i \in [n]$, $\phi(x_i)=\alpha_i x_i$ and $\phi(y_i)=\alpha_i\inv y_i$ for some $\alpha_i \in \kk^\times$ with $m_i=\ord(\alpha_i)<\infty$, and
\item the integers $\ell,m_1,\hdots,m_n$ are pairwise relatively prime.
\end{enumerate}
These conditions guarantee that a monomial is fixed if and only if each term is fixed (including the coefficient). In particular, since the ambient ring is a (T)GWA (with trivial automorphism) then this follows from \cite[Theorem 2.10]{GR}.
Thus, the invariant ring $A^{\grp{\phi}}$ is generated by 
$R^{\grp{\phi\restrict{R}}}$, $x_i^{m_i}$, and $y_i^{m_i}$ for all $i \in [n]$.

\begin{theorem}\label{thm.invariants}
Let $A$ be a TGWPA of rank $n$. Let $\phi$ be a Poisson automorphism of $A$ satisfying the above. Define $\bdelta$ and $\bb$ by $\delta_i=m_i \partial_i$ and $b_i=a_i^{m_i}$ for each $i \in [n]$. 
Define $\nu_{ij} = m_i m_j \mu_{ij}$ and $\eta_{ij} = m_i m_j \gamma_{ij}$ for $i,j \in [n]$ with $i \neq j$. Then $B=A^{\grp{\phi}}$ is a TGWPA and $B \iso A_\nu(R,\bdelta,\bb)$.
\end{theorem}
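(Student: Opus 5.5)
The plan is to build an explicit Poisson homomorphism from the proposed TGWPA onto $B$ and then show it is bijective. One caveat first. Since $B$ contains only the fixed elements of $R$, the coefficient ring that actually appears should be $R' := R^{\grp{\phi\restrict{R}}}$. I will read $A_\nu(R,\bdelta,\bb)$ with $R'$ in this role and verify that the data $(R',\bdelta,\bb,\nu,\eta)$ are legitimate TGWPA data.

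\emph{Step 1: the data.} Applying $\phi$ to $\{r,x_j\}=\partial_j(r)x_j$ gives $\{\psi(r),\alpha_jx_j\}=\psi(\partial_j(r))\alpha_jx_j$, where $\psi=\phi\restrict{R}$. Hence $\partial_j\psi=\psi\partial_j$, so each $\delta_j=m_j\partial_j$ restricts to a commuting family of Poisson derivations of $R'$. Since $\psi(a_i)=a_i$, we have $b_i=a_i^{m_i}\in R'$. The element $b_i$ is Poisson central because $a_i$ is, and it is regular because $a_i$ is regular in $R$. Next,
\[\delta_j(b_i)=m_j\partial_j(a_i^{m_i})=m_jm_i\gamma_{ij}a_i^{m_i}=\eta_{ij}b_i,\]
and the consistency equations \eqref{eq.gamma} for $(\eta,\nu)$ follow by multiplying those for $(\gamma,\mu)$ by $m_im_j$.

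\emph{Step 2: the map.} Define $\Phi:A_\nu(R',\bdelta,\bb)\to B$ by $r\mapsto r$, $X_i\mapsto x_i^{m_i}$ and $Y_i\mapsto y_i^{m_i}$. Because $A$ is commutative, $y_i^{m_i}x_i^{m_i}=(y_ix_i)^{m_i}=a_i^{m_i}=b_i$, so $\Phi$ is a well-defined algebra map. To check that it preserves brackets, I use the Leibniz rule on powers:
\begin{itemize}
\item $\{r,x_i^{m_i}\}=m_i\partial_i(r)x_i^{m_i}=\delta_i(r)x_i^{m_i}$, and similarly for $y_i^{m_i}$;
\item $\{y_i^{m_i},x_i^{m_i}\}=m_i^2a_i^{m_i-1}\partial_i(a_i)=m_i\partial_i(a_i^{m_i})=\delta_i(b_i)$;
\item for $i\neq j$, $\{y_i^{m_i},x_j^{m_j}\}=m_im_j\mu_{ij}x_j^{m_j}y_i^{m_i}$, and likewise for the $\{x,x\}$ and $\{y,y\}$ brackets with the scaled coefficients $m_im_j(\gamma_{ij}-\mu_{ij})$ and $m_im_j(\mu_{ji}-\gamma_{ij})$.
\end{itemize}
These match the defining brackets of $A_\nu(R',\bdelta,\bb)$.

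\emph{Step 3: bijectivity.} Surjectivity follows from the stated description of $A^{\grp{\phi}}$: it is generated by $R'$, the $x_i^{m_i}$ and the $y_i^{m_i}$.

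Injectivity is the step I expect to need the most care. I would use the $\ZZ^n$-gradings. Each graded component of $A_\nu(R',\bdelta,\bb)$ of degree $\balpha$ is $R'\cdot w_\balpha$, where $w_\balpha$ is the standard monomial with $X_i^{\alpha_i}$ or $Y_i^{-\alpha_i}$ in each slot according to the sign of $\alpha_i$. The map $\Phi$ sends this component into the component $A_{m\balpha}=R\cdot w'_{m\balpha}$ of $A$, where $m\balpha=(m_1\alpha_1,\hdots,m_n\alpha_n)$ and $w'_{m\balpha}$ is the corresponding standard monomial in the $x_i,y_i$. Distinct degrees $\balpha$ go to distinct degrees $m\balpha$, so it suffices to check injectivity on each component.

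On a single component, $A$ is a free $R$-module on standard monomials. This follows from Theorem \ref{thm.rees} (the embedding into the skew Laurent ring $R[\bt^{\pm1}]$) together with regularity of the $a_i$. So $rw'_{m\balpha}=0$ forces $r=0$, and $\Phi$ is injective on each graded piece, hence injective.
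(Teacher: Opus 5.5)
Your proposal is correct and follows the same route as the paper. You identify $B$ with the algebra generated by $R^{\grp{\phi\restrict{R}}}$, the $x_i^{m_i}$ and the $y_i^{m_i}$, which is in fact the coefficient ring the paper's proof uses despite the $R$ in the statement, and you compute the brackets among these generators by the Leibniz rule on powers. You also supply two points the paper leaves implicit: that $\partial_j$ commutes with $\phi\restrict{R}$, so the $\delta_j$ act on the fixed ring, and the injectivity of the comparison map, via the $\ZZ^n$-grading and freeness of $A$ over $R$ on standard monomials.
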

\begin{proof}
As an algebra, the invariant ring is
\[ A^{\grp{\phi}} = R^{\grp{\phi}}[x_1^{m_1},y_1^{m_1},\hdots,x_n^{m_n},y_n^{m_n}]/(x_1^{m_1}y_1^{m_1}-a_1^{m_1},\hdots,x_n^{m_n}y_n^{m_n}-a_n^{m_n}).\]
For $r \in R$ and $i,j \in [n]$ with $i \neq j$ we have 
\begin{align*}
\{ y_i^{m_i}, r \} &= m_i \{y_i,r\} y_i^{m_i-1}  = m_i \partial_i(r) y_i^{m_i} = \delta_i(r) y_i^{m_i}  \\
\{ x_i^{m_i}, r \} &= -m_i \{x_i,r\} x_i^{m_i-1}  = -m_i \partial_i(r) x^{m_i}  = -\delta_i(r) x^{m_i} \\ 
\{ y_i^{m_i}, x_j^{m_j} \}
	&= m_i m_j x_j^{m_j-1} y_i^{m_i-1} \{ y_i, x_j \} 
	= m_i m_j \mu_{ij} x_j^{m_j} y_i^{m_i} \\
\{ x_i^{m_i}, x_j^{m_j} \}
	&= m_i m_j x_i^{m_i-1} x_j^{m_j-1} \{ x_i, x_j \} 
	= (\lambda_{ij}-\nu_{ij}) x_i^{m_i} x_j^{m_j} \\
\{ y_i^{m_i}, y_j^{m_j} \}
	&= m_i m_j y_i^{m_i-1} y_j^{m_j-1} \{ y_i, y_j \} 
	= (\nu_{ji}-\lambda_{ij}) y_i^{m_i} y_j^{m_j} \\
\{ y_i^{m_i}, x_i^{m_i} \} &= m_i^2 x_i^{m_i-1} y_i^{m_i-1} \{ y_i,x_i \} = m_i^2 a_i^{m_i-1} \partial_i(a_i) = m_i \partial_i(a_i^{m_i}) = \delta_i(a_i^{m_i}).
\end{align*}
Note that
\[ \nu_{ij} + \nu_{ji} = m_im_j(\mu_{ij} + \mu_{ji}) = m_im_j(\gamma_{ij} + \gamma_{ji})
	= \eta_{ij} + \eta_{ji}\]
and so $B$ satisfies the consistency equation \eqref{eq.gamma}.
The result follows.
\end{proof}

\subsection{Multiparameter Weyl Poisson algebras}

The multiparameter (quantized) Weyl algebra was defined by Maltsiniotis \cite{malt}. The semi-classical limits of these were studied by Oh \cite{OH_mppwa}, who named them \emph{multiparameter Poisson Weyl algebras}. The algebras presented below are isomorphic to Oh's. See also \cite[Proposition 3.4]{LY}.

Let $\bq = (q_1,\hdots, q_n) \in \kk^n$ and let $\Lambda = (\lambda_{ij}) \in M_n(\kk)$ be skew-symmetric. Set 
\[ z_i = 1 + \sum_{k =1}^i q_k x_ky_k \qquad \text{for $i \in [n]$}.\]
The \emph{multiparameter quantized Weyl Poisson algebra of degree $n$ over $\kk$}, denoted $A_n^{\bq,\Lambda}(\kk)$ is $\kk[x_1,y_1,\hdots,x_n,y_n]$ with Poisson bracket
\begin{align*}
\{ y_i, y_j \} &= \lambda_{ij} y_iy_j \quad \text{for all $i,j$} &
\{ x_i, x_j \} &= (q_i+\lambda_{ij}) x_ix_j \quad \text{for $i<j$} \\
\{ y_i, x_j \} &= \lambda_{ji} x_jy_i \quad \text{for $i<j$} &
\{ y_i, x_j \} &= (q_j+\lambda_{ji}) x_jy_i \quad \text{for $i>j$} \\
\{ y_i, x_i \} &= z_i \quad \text{for all $i$}.
\end{align*}

Let $R=\kk[s_1,\hdots,s_n]$ be a Poisson algebra with trivial bracket. Set $\bfs = (s_1,\hdots,s_n)$ and define Poisson derivations $\bpartial=(\partial_1,\hdots,\partial_n)$ on $R$ by
\begin{align}\label{eq.mqwpa_partial}
\partial_i(s_j) = \begin{cases}
0 & i<j \\
\displaystyle 1+ \sum_{k=1}^i q_k s_k & j=i \\
q_j s_j & i>j.
\end{cases}
\end{align}

Define $\mu = (\mu_{ij}) \in M_n(\kk)$ by
\[ 
\mu_{ij} = \begin{cases}
	\lambda_{ji} & i < j \\
	q_j + \lambda_{ji} & i > j.
\end{cases}
\]

\begin{proposition}\label{prop.mqwpa}
Let $A=A_\mu(R,\bdelta,\bfs)$ be the TGWPA corresponding to the above data. 
Then $A \iso A_n^{\bq,\Lambda}(\kk)$.
\end{proposition}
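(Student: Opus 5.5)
The plan is to build the isomorphism explicitly. Define $\Phi: A_\mu(R,\bpartial,\bfs) \to A_n^{\bq,\Lambda}(\kk)$ by $x_i \mapsto x_i$, $y_i \mapsto y_i$, and $s_i \mapsto x_iy_i$. Note that the derivations in the statement are the $\bpartial$ of \eqref{eq.mqwpa_partial}.

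\textbf{Step 1: the TGWPA data are admissible.} First I would check that the data satisfy Definition \ref{defn.tgwpa}.
\begin{itemize}
\item The $\partial_i$ are derivations of the polynomial ring $R$, and they are Poisson derivations automatically because the bracket on $R$ is trivial.
\item They pairwise commute. It suffices to check $\partial_i\partial_j(s_k)=\partial_j\partial_i(s_k)$ on generators, by a short case analysis on the relative order of $i,j,k$, using that $\partial_i(s_k)$ is linear in $s_1,\dots,s_i$.
\item Each $s_i$ is regular and Poisson central.
\item Reading off $\partial_j(s_i)=\gamma_{ij}s_i$ from \eqref{eq.mqwpa_partial}: $\gamma_{ij}=q_i$ for $j>i$ and $\gamma_{ij}=0$ for $j<i$ (there $\partial_j(s_i)=0$).
\item The consistency equation \eqref{eq.gamma}: for $i<j$ it reads $\gamma_{ij}+\gamma_{ji}=q_i$, while $\mu_{ij}+\mu_{ji}=\lambda_{ji}+q_i+\lambda_{ij}=q_i$ by skew-symmetry of $\Lambda$.
\end{itemize}

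\textbf{Step 2: $\Phi$ is an algebra isomorphism.} As a commutative algebra, $A=\kk[s_1,\dots,s_n][x_i,y_i]/(y_ix_i-s_i)\iso \kk[x_1,y_1,\dots,x_n,y_n]$, since the relations simply eliminate the $s_i$. So $\Phi$ is an algebra isomorphism.

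\textbf{Step 3: $\Phi$ preserves brackets.} It suffices to compare brackets on the generators $x_i,y_i$, because the brackets involving $s_k=y_kx_k$ then follow by the Leibniz rule. Using the formulas of Definition \ref{defn.tgwpa}:
\begin{itemize}
\item $\{y_i,x_i\}=\partial_i(s_i)=1+\sum_{k\le i}q_ks_k$, which maps to $z_i$.
\item $\{y_i,x_j\}=\mu_{ij}x_jy_i$, which matches Oh's bracket by the definition of $\mu$ in both cases $i<j$ and $i>j$.
\item For $i<j$, $\{x_i,x_j\}=(\gamma_{ij}-\mu_{ij})x_ix_j=(q_i-\lambda_{ji})x_ix_j=(q_i+\lambda_{ij})x_ix_j$, as required.
\item For $i<j$, $\{y_i,y_j\}=(\mu_{ji}-\gamma_{ij})y_iy_j=(q_i+\lambda_{ij}-q_i)y_iy_j=\lambda_{ij}y_iy_j$.
\item The cases $i>j$ follow by antisymmetry.
\end{itemize}

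\textbf{Step 4: consistency of the $R$-brackets.} Finally, since the bracket on $A_n^{\bq,\Lambda}(\kk)$ is given only on generators, I must check that the TGWPA relations $\{s_k,x_i\}=\partial_i(s_k)x_i$ hold in $A_n^{\bq,\Lambda}(\kk)$ under $s_k\mapsto x_ky_k$. Expand $\{x_ky_k,x_i\}$ by Leibniz and split into three cases, comparing with \eqref{eq.mqwpa_partial}:
\begin{itemize}
\item $k<i$: expect $q_kx_ky_kx_i$, since $\partial_i(s_k)=q_ks_k$.
\item $k=i$: expect $z_ix_i$.
\item $k>i$: expect $0$, since $\partial_i(s_k)=0$.
\end{itemize}
The analogous checks for the $y_i$ follow by symmetry.

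This last bookkeeping step is the main obstacle, because the sign conventions (for example $\{x_k,x_i\}$ versus $\{x_i,x_k\}$) must line up exactly. For instance, when $k>i$ the terms $\{x_k,x_i\}y_k+x_k\{y_k,x_i\}$ should cancel: they give $-(q_i+\lambda_{ik})+(q_i+\lambda_{ik})=0$. I would tabulate all such cases explicitly.
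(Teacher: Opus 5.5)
\textbf{Gap in Step 1.} Your Steps 2--4 are the paper's argument: the same map $\Phi$ ($X_i\mapsto x_i$, $Y_i\mapsto y_i$, $s_i\mapsto x_iy_i$), followed by the same generator-by-generator comparison, including $\{s_k,x_i\}$ and $\{s_k,y_i\}$. Every coefficient identity you write there is correct. The genuine gap is in Step 1, where you assert that the $\partial_i$ pairwise commute ``by a short case analysis''. That analysis fails. Already for $n=2$,
\[
\partial_1\partial_2(s_1)=\partial_1(q_1s_1)=q_1+q_1^2s_1,\qquad \partial_2\partial_1(s_1)=\partial_2(1+q_1s_1)=q_1^2s_1,
\]
so $[\partial_1,\partial_2](s_1)=q_1$. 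In general, for $i<j$ the element $[\partial_i,\partial_j](s_i)$ has constant term $q_i$. It comes from the constant $1$ in $\partial_i(s_i)$, which your ``linear in $s_1,\dots,s_i$'' heuristic overlooks. So the data satisfy Definition~\ref{defn.tgwpa} only when $q_1=\cdots=q_{n-1}=0$.

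This is not a removable technicality. With the TGWPA bracket formulas, the Jacobi sum for $(y_1,x_1,x_2)$ equals $\bigl(\gamma_{12}\partial_1(s_1)-\partial_2\partial_1(s_1)\bigr)x_2$, which here is $q_1x_2$. A direct computation in $A_n^{\bq,\Lambda}(\kk)$ with the displayed brackets gives the same value $q_1x_2$. Hence for $q_1\neq 0$ neither side is a Poisson algebra, and the proposition is not a meaningful statement about TGWPAs. What your Steps 2--4 establish is only that $\Phi$ intertwines the two bracket formulas on generators. The same holds for the paper's proof, which never checks admissibility of the data.

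\textbf{What a true statement requires.} The data have to change. For example, take
\[
\partial_i(s_j)=0 \ (j<i),\qquad \partial_i(s_j)=-q_is_j \ (j>i),\qquad \partial_i(s_i)=-\Bigl(1+\sum_{k\le i}q_ks_k\Bigr).
\]
These derivations do commute, with $\gamma_{ij}=0$ for $i<j$ and $\gamma_{ij}=-q_j$ for $i>j$. Take also $\mu_{ij}=\lambda_{ji}-q_i$ for $i<j$ and $\mu_{ij}=\lambda_{ji}$ for $i>j$; this satisfies \eqref{eq.gamma}. Your same map $\Phi$ then identifies this TGWPA with $\kk[x_1,y_1,\dots,x_n,y_n]$ carrying the displayed $\{x_i,x_j\}$ and $\{y_i,y_j\}$, but with $\{x_i,y_i\}=z_i$, and, for $i<j$, $\{x_j,y_i\}=(q_i+\lambda_{ij})x_jy_i$ and $\{x_i,y_j\}=\lambda_{ji}x_iy_j$. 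That bracket does satisfy the Jacobi identity.
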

\begin{proof}
By \eqref{eq.mqwpa_partial}, $\gamma_{ji}=0$ if $i<j$ and $\gamma_{ji}=q_j$ if $i>j$.
Let $X_i,Y_i$ be the standard generators on $A$ and $x_i,y_i$ those of $A'=A_n^{\bq,\Lambda}(\kk)$. Then the map $\Phi:A \to A'$ given by $\Phi(X_i)=x_i$, $\Phi(Y_i) = y_i$, and $\Phi(s_i)=x_iy_i$ clearly extends to an algebra isomorphism. It remains only to show that this is a Poisson homomorphism. 

Let $r \in R$ and $i,j \in [n]$.
First, suppose $i<j$. Then
\begin{align*}
\{ \Phi(s_j), \Phi(X_i) \} - \Phi(\partial_i(s_j)) \Phi(X_i)
	&= \{ x_jy_j, x_i \} 
	= \{ x_j, x_i\}y_j + \{ y_j,x_i\} x_j \\
	&= ( -(q_i+\lambda_{ij}) + (q_i+\lambda_{ij})x_ix_jy_j = 0 \\
\{ \Phi(s_j), \Phi(Y_i) \} + \Phi(\partial_i(s_j)) \Phi(Y_i)
	&= \{ x_jy_j, y_i \} 
	= \{ x_j, y_i\}y_j + \{ y_j,y_i\} x_j \\
	&= ( -\lambda_{ji} - \lambda_{ij} ) x_jy_iy_j = 0 \\
\{ \Phi(Y_i), \Phi(X_j) \} - \mu_{ij}\Phi(X_j)\Phi(Y_i)
	&= \{ y_i, x_j \} - \lambda_{ji}y_ix_j = 0 \\
\{ \Phi(X_i), \Phi(X_j) \} - (\gamma_{ij}-\mu_{ij}) \Phi(X_i)\Phi(X_j)
	&= \{ x_i, x_j \} - (q_i-\lambda_{ji})x_ix_j = 0 \\
\{ \Phi(Y_i), \Phi(Y_j) \} - (\mu_{ji}-\gamma_{ij})\Phi(Y_i)\Phi(Y_j)
	&= \{ y_i, y_j \} - ((q_i + \lambda_{ij}) - q_i)y_iy_j = 0.
\end{align*}
If $i>j$, then
\begin{align*}
\{ \Phi(s_j), \Phi(X_i) \} - \Phi(\partial_i(s_j)) \Phi(X_i)
	&= \{ x_j, x_i\}y_j + \{ y_j,x_i\} x_j - q_j x_ix_jy_j \\
	&= ( (q_j+\lambda_{ji}) + \lambda_{ij} - q_j)x_ix_jy_j = 0\\
\{ \Phi(s_j), \Phi(Y_i) \} + \Phi(\partial_i(s_j)) \Phi(Y_i)
	&= \{ x_j, y_i\}y_j + \{ y_j,y_i\} x_j + q_j x_jy_iy_j \\
	&= ( -(q_j+\lambda_{ji}) + \lambda_{ji} + q)_j) x_iy_iy_j = 0 \\
\{ \Phi(Y_i), \Phi(X_j) \} - \mu_{ij}\Phi(X_j)\Phi(Y_i)
	&= \{ y_i, x_j \} - (q_j+\lambda_{ji})y_ix_j = 0 \\
\{ \Phi(X_i), \Phi(X_j) \} - (\gamma_{ij}-\mu_{ij}) \Phi(X_i)\Phi(X_j)
	&= \{ x_i, x_j \} + (q_j+\lambda_{ji})x_ix_j = 0 \\
\{ \Phi(Y_i), \Phi(Y_j) \} - (\mu_{ji}-\gamma_{ij})\Phi(Y_i)\Phi(Y_j)
	&= \{ y_i, y_j \} - (\lambda_{ij})y_iy_j = 0.
\end{align*}
If $i=j$, then
\begin{align*}
\{ \Phi(s_i), \Phi(X_i) \} - \Phi(\partial_i(s_i)) \Phi(X_i)
	&= \{ x_iy_i, x_i \} - \Phi\left( 1+ \sum_{k=1}^i q_k s_k \right)x_i
	= \left( \{ y_i,x_i\} - z_i \right)x_i = 0
\\
\{ \Phi(s_i), \Phi(Y_i) \} + \Phi(\partial_i(s_i)) \Phi(Y_i)
	&= \{ x_iy_i, y_i \} + \Phi\left( 1+ \sum_{k=1}^i q_k s_k \right)y_i
	= \left( \{ x_i,y_i\} + z_i \right)y_i = 0.
\end{align*}
Finally, for all $i \in [n]$, 
\begin{align*}
\{ \Phi(Y_i), \Phi(X_i) \} - \Phi( \partial_i(s_i) ) 
	&= \{y_i,x_i\} - \Phi\left( 1+ \sum_{k=1}^i q_k s_k \right)
	= z_i - z_i = 0.
\end{align*}
This proves the result.
\end{proof}

\section{Simplicity}
\label{sec.simple}

In this final section, we give a criteria for simplicity of TGWPAs. This criteria is analogous to the one for (T)GWAs and essentially the same as that for GWPAs, as given by Bavula \cite[Theorem 1.1]{Bpoisson}, though our proof is different.

For $\balpha \in \ZZ^n$, we let $z^{\balpha}$ denote the monomial $z_1^{\alpha_1} z_2^{\alpha_2} \cdots z_n^{\alpha_n}$ where 
\[
z_i = \begin{cases}
x_i & \text{if $\alpha_i > 0$} \\
y_i & \text{if $\alpha_i<0$} \\
1 & \text{if $\alpha_i=0$.}
\end{cases}\]
Similarly, $\partial_{\balpha} = \sum_{i=1}^n \alpha_i \partial_i$. 
Then $R^{\ZZ^n} = \{ r \in R \mid \partial^{\alpha}(r) = 0 \text{ for all } \alpha \in \ZZ^n\}$. 

For $\balpha \in \ZZ^n$, a Poisson ideal $I$ of $R$ is \emph{$\partial_{\balpha}$-invariant} if $\partial_{\balpha}(I) \subset I$. The ring $R$ is \emph{$\ZZ^n$-simple} if $0$ and $R$ are the only $\partial_{\balpha}$-invariant ideals for all $\balpha \in \ZZ^n$.

The following was proved in the rank one case by Oh \cite[Lemma 3.3]{OH1}, which is a Poisson analogue of a result of Jordan \cite[Theorem 1]{jsimp1}. See also \cite[Proposition 5.4]{bell1} and \cite[Corollary 10]{jespers}.

\begin{lemma}\label{lem.laurent}
Let $R$ be a Poisson algebra and let $\bpartial=(\partial_1,\hdots,\partial_n)$ be a set of commuting Poisson derivations. Set $B=R[x_1^{\pm 1},\hdots,x_n^{\pm 1};\bpartial]_P$. Then $B$ is Poisson simple if and only if 
\begin{enumerate}
\item \label{lnt1} $R$ has no $\bpartial$-invariant Poisson ideals, and
\item \label{lnt2} the derivation $\partial_{\balpha}$ is not inner for any $\balpha \in \ZZ^n$, $\balpha \neq \bzero$.
\end{enumerate}
\end{lemma}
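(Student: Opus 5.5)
We prove both directions of Lemma \ref{lem.laurent} using the $\ZZ^n$-grading on $B$, with $\deg(r)=\bzero$ for $r\in R$ and $\deg(x_i^{\pm 1})=\pm\be_i$. Every element of $B$ can be written uniquely as $\sum_{\balpha} r_{\balpha} x^{\balpha}$ with $x^{\balpha}=x_1^{\alpha_1}\cdots x_n^{\alpha_n}$. The brackets we need are
\[ \{r,x^{\balpha}\} = \partial_{\balpha}(r)x^{\balpha}, \qquad \{x^{\balpha},x^{\bbeta}\}=0, \]
the second holding because the $x_i$ Poisson commute here (the skew matrix is zero in $R[\bt^{\pm1};\bpartial]_P$). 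By the Leibniz rule,
\[ \{s, r x^{\balpha}\} = \bigl(\{s,r\} + r\,\partial_{\balpha}(s)\bigr)x^{\balpha}, \qquad \{x_i, r x^{\balpha}\} = -\partial_i(r)\,x_i x^{\balpha}. \]

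\emph{Necessity.} Suppose first that $I\subsetneq R$ is a nonzero $\bpartial$-invariant Poisson ideal. Then $IB=\bigoplus_{\balpha} I x^{\balpha}$ is an ideal. It is closed under bracketing with $R$, since $\{s,r\}\in I$ and $r\partial_{\balpha}(s)\in I$. It is closed under bracketing with the $x_i^{\pm1}$ by $\bpartial$-invariance. Hence $IB$ is a proper nonzero Poisson ideal, because $IB\cap R=I$.

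Suppose next that $\partial_{\balpha}$ is inner for some $\balpha\neq\bzero$, with witness $u$, so $u\partial_{\balpha}(r)=\{u,r\}$. Consider the element $f=1+c\,u\,x^{-\balpha}$ (or $u+x^{\balpha}$; we choose the sign so that it works). A direct check should show that $f$ is Poisson central, or at least Poisson normal. Then $fB$ is a proper Poisson ideal, since a homogeneous-degree comparison shows $f$ is not a unit when $\balpha\neq\bzero$. Concretely, take $f=u\,x^{-\balpha}-1$ or similar, and fix the sign by computing $\{r,u x^{-\balpha}\}=(\{r,u\}-u\partial_{\balpha}(r))x^{-\balpha}=0$. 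Brackets with $x_i$ give $\{x_i,ux^{-\balpha}\}=-\partial_i(u)x_ix^{-\balpha}$. We still need $\partial_i(u)=0$, or at least normality of $u$ under $\partial_i$. The obstacle here is that $\partial_i(u)$ need not vanish. The fix is to observe that if $u$ witnesses innerness then so does any element of $u+\ker$, and that $\partial_i(u)$ again satisfies the witness equation because the $\partial$'s commute. So $\partial_i(u)=\lambda u$ whenever the space of witnesses is one-dimensional modulo the center. If this fails, we will instead use the ideal generated by all such elements. Since this direction is only needed as stated, we may also reduce to the case where $R$ has no invariant ideals, in which such witnesses are unique up to $\bpartial$-constants.

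\emph{Sufficiency.} Let $J\neq 0$ be a Poisson ideal of $B$ and choose $0\neq f=\sum r_{\balpha}x^{\balpha}\in J$ with minimal support size. Multiplying by a unit $x^{-\bbeta}$, we may assume $r_{\bzero}\neq0$. The set of leading coefficients $r_{\bzero}$ of minimal-support elements of $J$ with the same support, together with $0$, forms an ideal of $R$. This ideal is Poisson and $\bpartial$-invariant: bracketing with $s\in R$ or with $x_i$ (and then multiplying by $x_i^{-1}$) preserves the support. By (1) this ideal equals $R$, so we may take $r_{\bzero}=1$. For $s\in R$, the element $\{s,f\}$ lies in $J$ and has $\bzero$-coefficient $\{s,1\}=0$, so it has strictly smaller support and must vanish. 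Similarly, $x_i^{-1}\{x_i,f\}=-\sum\partial_i(r_{\balpha})x^{\balpha}$ has zero $\bzero$-term and hence vanishes. Thus every $r_{\balpha}$ satisfies $\partial_i(r_{\balpha})=0$ for all $i$, and $\{s,r_{\balpha}\}+r_{\balpha}\partial_{\balpha}(s)=0$ for all $s$, that is, $r_{\balpha}\partial_{\balpha}(s)=\{r_{\balpha},s\}$. If some $r_{\balpha}\neq0$ with $\balpha\neq\bzero$, then $\partial_{\balpha}$ is inner, contradicting (2). Hence $f=1$ and $J=B$.

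The main obstacle is the second necessity case, namely producing a Poisson normal element from an inner $\partial_{\balpha}$. The sufficiency argument is routine once the minimal-support trick is set up.
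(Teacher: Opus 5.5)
Your sufficiency argument is correct, and it is more careful than the paper's. The paper asserts that $\{r,f\}$ has smaller support without first arranging $c_{\bzero}=1$. Your use of (1) on the ideal of $\bzero$-coefficients of minimal-support elements is exactly what justifies that step. The first half of necessity (the ideal $IB$) is the same as the paper's.

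The genuine gap is the second half of necessity. You never produce a proper nonzero Poisson ideal from an inner $\partial_{\balpha}$, and the repairs you list (one-dimensional witness space, ``the ideal generated by all such elements'', reduction to $\bpartial$-simple $R$) are not carried out. There are also three slips:
\begin{itemize}
\item With $\{u,r\}=u\partial_{\balpha}(r)$, it is $ux^{\balpha}$, not $ux^{-\balpha}$, that Poisson-commutes with $R$; your displayed expression equals $-2u\partial_{\balpha}(r)x^{-\balpha}$.
\item Normality of $ux^{\balpha}$ gives nothing when it is a unit.
\item $1+ux^{\balpha}$ is a unit if $u$ is nilpotent, so ``degree comparison'' alone does not show it is a non-unit.
\end{itemize}
The paper does not get past this point either. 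It checks only $\{r,uz^{\balpha}\}=0$ for $r\in R$ and then asserts $uz^{\balpha}\in Z(B)$. That silently assumes $\partial_i(u)=0$ for all $i$, which is precisely the obstacle you flagged.

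That obstacle cannot be removed: for $n\geq 2$ the implication ``simple $\Rightarrow$ (2)'' is false. Work over a field of characteristic $0$ and let $R=\kk[a^{\pm1},b]$ with $\{a,b\}=a$, $\partial_1=\partial/\partial b$, $\partial_2=a\,\partial/\partial a$.
\begin{itemize}
\item These are commuting Poisson derivations, and $R$ has no proper nonzero $\bpartial$-invariant ideals.
\item $\{a,r\}=a\partial_1(r)$ for all $r$, so $\partial_{\be_1}$ is inner, even with a unit witness.
\item The witness space is $\kk a$, so your uniqueness reduction only yields $\partial_2(a)=a$.
\end{itemize}
Now put $y=ax_1$. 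Then $\{b,y\}=\{x_1,y\}=0$, $\{y,x_2\}=yx_2$, and $\{b,x_2\}=\{x_1,x_2\}=0$. Hence $B\cong S\otimes\kk[y^{\pm1},x_2^{\pm1}]$, where $S=\kk[x_1^{\pm1},b]$ with $\{b,x_1\}=x_1$, and the two factors Poisson-commute. The operators $y^{-1}\{y,-\}$ and $x_2^{-1}\{x_2,-\}$ act on $S\,y^px_2^q$ by the scalars $q$ and $-p$. So every nonzero Poisson ideal of $B$ meets $S$, which is Poisson simple by the rank-one sufficiency. Thus $B$ is Poisson simple, although (2) fails.

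What is true is necessity of the weaker condition $(2')$: no $ux^{\balpha}$ with $u\neq0$ and $\balpha\neq\bzero$ is Poisson central. Equivalently, no nonzero witness $u$ for $\partial_{\balpha}$ has $\partial_i(u)=0$ for all $i$. Indeed, if $B$ is Poisson simple then $Z(B)$ is a $\ZZ^n$-graded field, so its nonzero elements are homogeneous, which rules out $1+ux^{\balpha}$. Your sufficiency proof only uses (2) for coefficients $r_{\balpha}$ that also satisfy $\partial_i(r_{\balpha})=0$, so it proves sufficiency under $(2')$. Together these give a correct equivalence.
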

\begin{proof}
($\Rightarrow$) Assume that $B$ is Poisson simple. 

Let $I$ be a proper $\bpartial$-invariant Poisson ideal of $R$.
We have $IB = \bigoplus_{\balpha \in \ZZ^n} Iz^{\balpha}$. Then for $i \in [n]$,
\[ \{x_i,IB\} = \bigoplus_{\balpha \in \ZZ^n} \{ x_i, IB\}
	=  \bigoplus_{\balpha \in \ZZ^n} \left(\partial_i(I) z^{\balpha+\be_i} + I \{x_i,z^{\balpha}\} \right)
	\subset IB.\]
Hence, $IB$ is a Poisson ideal of $B$ which is invariant under all $\partial_i$ (and hence all $\partial_{\balpha}$). Thus, $1 \in IB$ so $1 \in I$.

Suppose $\partial^{\balpha}$ is inner for some $\balpha \neq \bzero$. So, there is some $u \in R$ such that $u\partial_{\balpha}(r) = \{u,r\}$ for all $r \in R$. Let $r \in R$, then
\[
 \{ r, uz^{\balpha} \} = \{ r,u\}z^{\balpha} +  \partial_{\balpha}(r)uz^{\balpha}
 	= -u\partial_{\balpha}(r) z^{\balpha} +  \partial_{\balpha}(r)uz^{\balpha} = 0.
\]
That is, $uz^{\balpha} \in Z(B)$ and so defines a proper, nonzero ideal, contradicting simplicity.

($\Leftarrow$) Assume \eqref{lnt1} and \eqref{lnt2} hold.
Let $I$ be a nonzero Poisson ideal of $A$. Let $f = \sum_{\balpha} c_{\balpha} x^{\balpha} \in I$ be a nonzero element of minimal support. After multiplying by a suitable monomial, we may assume
\[ f= c_\bzero + \sum_{\balpha \neq \bzero} c_{\balpha} x^{\balpha}\]
with $c_\bzero \neq 0$. Let $r \in R$, then
\[
\{ r, f\} = \sum_{\balpha} \left(\{r,c_{\balpha}\} + c_{\balpha} \partial_{\balpha}(b)\right) x^{\balpha}.
\]
In particular, $\{r,f\}$ has smaller support and so we conclude that $\{r,f\} = 0$.

Let $\bbeta \in \supp(f)$, $\bbeta \neq 0$. Then we have
$c_{\bbeta} \partial_{\bbeta}(r) = \{ c_{\bbeta}, r\}$
for all $r \in R$, so $\partial_{\bbeta}$ is inner, contradicting \eqref{lnt2}.

We conclude that $I \subset R$. Let $c \in I \cap R$ with $c \neq 0$. Then $\{c,x_i\} = \partial_i(c)x_i \in I$, which implies that $\partial_i(c)=0$. Thus, $I \cap R$ is $\bpartial$-invariant, so $I=R$ by \eqref{lnt1}.
\end{proof}

\begin{lemma}\label{lem.gcd}
Suppose there exists $i \in [n]$ such that $I=Ra_i + R\partial_i(a_i) \neq R$.
Then 
\[ 
J = \bigoplus_{\balpha \in \ZZ^n, \alpha_i \neq 0} Rx^{\balpha} \oplus 
	\bigoplus_{\balpha \in \ZZ^n, \alpha_i = 0} Ix^{\balpha}
\]
is a proper, nonzero Poisson ideal of $A$.
\end{lemma}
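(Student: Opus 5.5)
The plan is to work with the $\ZZ^n$-grading of $A$ and check the ideal properties degree by degree. We use $x^{\balpha}$ and $z^{\balpha}$ interchangeably for the standard monomial of degree $\balpha$.

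First I will record that $A=\bigoplus_{\balpha\in\ZZ^n} Rz^{\balpha}$, with each $Rz^{\balpha}$ a free $R$-module of rank one. This follows from Theorem \ref{thm.rees}: under the embedding into the skew Laurent ring, $z^{\balpha}$ maps to $\prod_{\alpha_j<0}a_j^{-\alpha_j}\,t^{\balpha}$, and each $a_j$ is regular. The multiplication satisfies $Rz^{\balpha}\cdot Rz^{\bbeta}\subseteq Rz^{\balpha+\bbeta}$. More precisely, if $\alpha_i$ and $\beta_i$ have opposite signs, then $z^{\balpha}z^{\bbeta}$ equals $z^{\balpha+\bbeta}$ times a power $a_i^{k}$, with $k=\min(|\alpha_i|,|\beta_i|)$, times similar factors for the other coordinates. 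The key consequence is the following. Suppose $\alpha_i\neq 0$ and $\alpha_i+\beta_i=0$. Then the product lands in $Ra_i z^{\balpha+\bbeta}\subseteq Iz^{\balpha+\bbeta}$, since $\alpha_i$ and $\beta_i$ then have opposite signs.

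Next I show that $J$ is an associative ideal. Since $J$ is a graded $R$-submodule, it suffices to multiply homogeneous pieces of $J$ by $Rz^{\bbeta}$. There are three cases.
\begin{enumerate}
\item If the target degree has nonzero $i$-th coordinate, the target component of $J$ is all of $R z^{\balpha+\bbeta}$, so there is nothing to check.
\item If the source lies in $Iz^{\balpha}$ with $\alpha_i=0$, the product lies in $I z^{\balpha+\bbeta}$, since $I$ is an ideal of $R$.
\item If $\alpha_i\neq0$ but $\alpha_i+\beta_i=0$, the previous paragraph gives a factor $a_i\in I$.
\end{enumerate}
Moreover $J$ is proper, because its degree-$\bzero$ part is $I\neq R$, so $1\notin J$. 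It is nonzero, because $x_i\in J$.

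For the Poisson property, it suffices to check that $\{g,J\}\subseteq J$ for the generators $g\in R\cup\{x_j,y_j\}$, since $J$ is already an ideal and $\{g,-\}$ is a derivation. I first establish two facts about $I$.
\begin{itemize}
\item $I$ is a Poisson ideal of $R$. Indeed, $a_i$ is Poisson central, and so is $\partial_i(a_i)$ by Lemma \ref{lem.cnt}. Hence $\{r,sa_i\}=\{r,s\}a_i\in I$, and likewise for $\partial_i(a_i)$.
\item $I$ is $\partial_j$-invariant for every $j$. For $j\neq i$ we have $\partial_j(a_i)=\gamma_{ij}a_i$, and $\partial_j\partial_i(a_i)=\partial_i\partial_j(a_i)=\gamma_{ij}\partial_i(a_i)$ because the $\partial$'s commute. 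For $j=i$ we have $\partial_i(a_i)\in I$ trivially. Note, however, that $\partial_i^2(a_i)$ need not lie in $I$, so $\partial_i$-invariance genuinely fails in general. This is harmless only because $\partial_i$ never acts on a degree-zero-in-$i$ component without shifting the $i$-th degree; the third bullet below relies on exactly this.
\end{itemize}
With these facts, the generator checks go as follows.
\begin{itemize}
\item For $r\in R$ and $c\in I$ with $\alpha_i=0$: we have $\{r,cz^{\balpha}\}=(\{r,c\}+c\,\partial_{\balpha}(r))z^{\balpha}$, which lies in $Iz^{\balpha}$.
\item For $j\neq i$: the bracket $\{x_j,cz^{\balpha}\}$ equals $-\partial_j(c)x_jz^{\balpha}$ plus $c\{x_j,z^{\balpha}\}$. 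The first term lies in $I z^{\balpha+\be_j}$ by $\partial_j$-invariance. For the second term, $\{x_j,z^{\balpha}\}$ is a combination of terms of the form $x_j z^{\balpha}$ (up to $a$-factors) together with terms from $\{x_j,y_j\}$ involving $\partial_j(a_j)$, so it lies in $A_{\balpha+\be_j}$ and the coefficient $c$ keeps it in $I$. The same argument works for $y_j$.
\item For $x_i$ and $y_i$: brackets of $I$-components always move to degree with $i$-th coordinate $\pm1$, where $J$ is full.
\end{itemize}

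The main obstacle is the bracket of $y_i$ (and symmetrically $x_i$) with a full component $Rz^{\balpha}$ having $\alpha_i=1$, because it lands in a degree with $i$-th coordinate $0$. Write $z^{\balpha}=x_iz^{\bbeta}$ with $\beta_i=0$. Then
\[
\{y_i,rx_iz^{\bbeta}\}=-\partial_i(r)a_iz^{\bbeta}+r\,\partial_i(a_i)z^{\bbeta}+ra_i\{y_i,z^{\bbeta}\}\,(a_i)^{-1}\text{-free terms},
\]
and I must verify carefully that every term carries a factor $a_i$ or $\partial_i(a_i)$. The first two terms do so visibly. The last term is $rx_i\{y_i,z^{\bbeta}\}$, which is a scalar multiple of $x_iy_iz^{\bbeta}=a_iz^{\bbeta}$ by the $\mu_{ij}$ relations. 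All three terms therefore lie in $Iz^{\bbeta}$. Once this is checked, together with the analogous case for $x_i$ acting on $\alpha_i=-1$, the proof that $J$ is a proper, nonzero Poisson ideal is complete.
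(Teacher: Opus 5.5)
Your proposal is correct and follows essentially the same route as the paper: a degree-by-degree check of brackets with the generators. The only nontrivial cases are $y_i$ (resp.\ $x_i$) acting on components with $\alpha_i=1$ (resp.\ $-1$), where a factor $a_i$ or $\partial_i(a_i)$ appears.

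You are more thorough than the paper, which only brackets with $x_j$ and leaves implicit that $I$ is a Poisson ideal stable under $\partial_j$ for $j\neq i$. The blemishes are cosmetic. The first term of your displayed bracket should be $+\partial_i(r)a_iz^{\bbeta}$, since $\{y_i,r\}=\partial_i(r)y_i$. The tail of that display is garbled, but your following sentence states the correct term $rx_i\{y_i,z^{\bbeta}\}\in Ra_iz^{\bbeta}$.
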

\begin{proof}
It is clear that $J$ is a proper, nonzero ideal of $A$. We need only show that $J$ is in fact a Poisson ideal.

Let $j\in[n]$. Suppose $\balpha \in \ZZ^n$ with $\alpha_i \neq 0$ and $r \in R$, then
\[
\{rz^{\balpha}, x_j\} 
	= \{r,x_j\} z^{\balpha} + r \{z^{\balpha}, x_j \}
	\in Rz^{\balpha+\be_j} \oplus Rz^{\balpha-\be_j}.
\]
If $i \neq j$, then $(\balpha+\be_j)_i \neq 0$ and $(\balpha-\be_j)_i \neq 0$. Suppose $i=j$. Then we need only consider when $\alpha_i=1$ or $-1$. When $\alpha_i=1$, 
\[ 
	\{z^{\balpha}, x_i \} = x_i \{ z^{\balpha-\be_i}, x_i \} \in \bigoplus_{\balpha \in \ZZ^n, \alpha_i \neq 0} Rz^{\balpha}.
\]
When $\alpha_1=-1$, 
\[ \{r,x_i\} z^{\balpha} = \delta_i(r)x_iy_i z^{\balpha+\be_i}
	= \delta_i a_i z^{\balpha+\be_i} \in Iz^{\balpha+\be_i}
\]
and $(\balpha+\be_i)_i=0$.

Similarly, if $\alpha_i = 0$, let $b \in I$. Then
\[ \{bz^{\balpha}, x_j\} 
	= \{b,x_j\} z^{\balpha} + b \{z^{\balpha}, x_j \}
	\in Iz^{\balpha+\be_j} \oplus Iz^{\balpha-\be_j}.
\]
If $i \neq j$, then $(\balpha+\be_j)_i =(\balpha-\be_j)_i = 0$.
If $i=j$, then $(\balpha+\be_i)_i \neq 0$ and $(\balpha-\be_i)_i \neq 0$ and so
\[
	\{bz^{\balpha}, x_j\} \in \bigoplus_{\balpha \in \ZZ^n, \alpha_i \neq 0} Rx^{\balpha}.
\]
Hence, $J$ is a Poisson ideal of $A$.
\end{proof}

Suppose $z^{\balpha}$ is a monomial with $\alpha_i \neq 0$. Similar to the above, there is a scalar, which we denote $\lambda_{\be_i,\balpha}'$, such that $\{y_i,z^{\balpha}\} = \lambda_{\be_i,\balpha}' y_iz^{\balpha}$.

\begin{theorem}\label{thm.simple}
Let $A=A_\mu(R,\bpartial,\ba)$ be a TGWPA of rank $n$ over a Poisson algebra $R$. Then $A$ is Poisson simple if and only if the following hold:
\begin{enumerate}
\item \label{sim1} $R$ has no proper $\bpartial$-invariant Poisson ideals,
\item \label{sim2} the derivation $\partial^{\balpha}$ is not inner for any $\balpha \in \ZZ^n$, $\balpha \neq \bzero$, and
\item \label{sim3} $Ra_i + R\partial_i(a_i)=R$ for all $i=1,\hdots,n$.
\end{enumerate}
\end{theorem}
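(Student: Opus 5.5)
Both directions rest on the embedding of Theorem \ref{thm.rees}. It realizes $A$ inside the skew Laurent Poisson algebra $B=R_\bp[\bt^{\pm1};\bpartial]$, with $x_i\mapsto t_i$ and $y_i\mapsto a_it_i\inv$. Since each $a_i$ is regular and Poisson central, the multiplicative set $\mathcal S$ generated by $a_1,\hdots,a_n$ consists of Poisson normal elements. Localizing at $\mathcal S$ gives $A_{\mathcal S}\iso R_{\mathcal S}[\bt^{\pm1};\bpartial]$, because $t_i\inv=a_i\inv y_i$. I will work with the $\ZZ^n$-grading $A=\bigoplus_{\balpha}Rz^{\balpha}$ throughout. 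Poisson ideals are in particular stable under $\{r,-\}$ for $r\in R$, and $\{r,sz^{\balpha}\}=(\{r,s\}+s\partial_{\balpha}(r))z^{\balpha}$, so the grading lets us compare supports exactly as in Lemma \ref{lem.laurent}.

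For necessity, suppose $A$ is Poisson simple. If (3) fails, Lemma \ref{lem.gcd} produces a proper nonzero Poisson ideal, a contradiction. If $I$ is a proper nonzero $\bpartial$-invariant Poisson ideal of $R$, I claim $IA=\bigoplus Iz^{\balpha}$ is a proper Poisson ideal. The brackets $\{r,-\}$, $\{x_i,-\}$ and $\{y_i,-\}$ act on $Iz^{\balpha}$ through $\partial_i(I)\subset I$, and the remaining terms are multiples by elements of $A$. It is proper because its degree-zero part is $I\neq R$. If instead $\partial_{\balpha}$ is inner via $u\neq0$ for some $\balpha\neq\bzero$, the computation of Lemma \ref{lem.laurent} shows $\{r,uz^{\balpha}\}=0$ for all $r\in R$. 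I then take the ideal generated by $uz^{\balpha}$, or better the $\ZZ^n$-graded ideal $\bigoplus_{\bbeta}(\text{stuff})z^{\bbeta}$ generated by it. This case is the delicate one. It is not clear that $uz^{\balpha}$ is Poisson central in $A$, since we must also check brackets with $x_j,y_j$, and ideals generated by it might be all of $A$. My first attempt would be to pass to the Laurent algebra. Assuming (1) and (3), the claim is that $A$ simple implies $A_{\mathcal S}$ simple: every Poisson ideal of $A_{\mathcal S}$ contracts to a Poisson ideal of $A$, and $\mathcal S$-torsion-freeness makes the contraction proper. Lemma \ref{lem.laurent} applied over $R_{\mathcal S}$ then forbids inner $\partial_{\balpha}$ on $R_{\mathcal S}$, and this restricts to $R$.

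For sufficiency, assume (1)–(3) and let $J$ be a nonzero Poisson ideal of $A$. The first step shows $J\cap R\neq0$. Pick $f\in J$ of minimal support, multiply by suitable $x_i$ or $y_i$ (allowed since the $a_i$ are regular) to assume $\bzero\in\supp f$, and bracket with $r\in R$ to kill the degree-$\bzero$ coefficient's contribution. Minimality then forces $c_{\bbeta}\partial_{\bbeta}(r)=\{c_{\bbeta},r\}$ up to a normalization involving $c_{\bzero}$. I expect this normalization to be the main obstacle: the leading coefficient need not be $1$, so I would run the argument in $A_{\mathcal S}$ over the fraction-like ring where needed, or use that the coefficient set of minimal support elements forms a Poisson ideal of $R$ that is $\bpartial$-invariant, hence equals $R$ by (1), so one may take $c_{\bzero}=1$. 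Then (2) forces $f\in R$.

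The second step shows $J\cap R=R$. The set $K=J\cap R$ is a Poisson ideal of $R$. Brackets $\{y_i,\{c,x_i\}\}$ and the identity $y_ix_i=a_i$ show that for $c\in K$ we have $\partial_i(c)a_i\in K$ and $\partial_i(c)\partial_i(a_i)+\partial_i^2(c)a_i\in K$, roughly. Using (3), write $1=ra_i+s\partial_i(a_i)$ to extract $\partial_i(c)\in K$. This mirrors the GWA argument, and I expect it to need an induction on the $\partial_i$-degree via the relation $\{c,x_i\}=\partial_i(c)x_i\in J$ combined with $y_i$. Once $K$ is shown to be $\bpartial$-invariant, (1) gives $K=R$, so $1\in J$.
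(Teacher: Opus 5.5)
\textbf{Necessity.} Your argument here is fine. (3) via Lemma \ref{lem.gcd} and (1) via $IA=\bigoplus Iz^{\balpha}$ are what the paper does. For (2), passing to $A_{\mathcal S}$ is a legitimate reduction to Lemma \ref{lem.laurent}: a Poisson ideal of $A_{\mathcal S}$ contracts to one of $A$, and an inner $\partial_{\balpha}$ on $R$ stays inner on $R_{\mathcal S}$. You do not need to assume (1) and (3) for this.

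\textbf{Sufficiency: the gap.} The gap is in Step 2, and it also affects option (b) of Step 1. From $c\in K=J\cap R$ you correctly get $a_i\partial_i(c)\in K$ and $\partial_i(a_i)\partial_i(c)+a_i\partial_i^2(c)\in K$. But writing $1=ra_i+s\partial_i(a_i)$ only yields
\[ \partial_i(c)\equiv -s\,a_i\partial_i^2(c) \pmod K, \]
so you would need $a_i\partial_i^2(c)\in K$. What the brackets give you at the next level is $\partial_i^2(c)x_i^2\in J$, hence $a_i^2\partial_i^2(c)\in K$, which is again one factor of $a_i$ too many. Each use of (3) trades the problem for the same problem one derivative higher. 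For an arbitrary Poisson derivation there is no ``$\partial_i$-degree'' to induct on, so $\bpartial$-invariance of $J\cap R$ is not established.

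The same obstruction hits the coefficient ideal $L$ of minimal-support elements in $A$. The element $\{x_i,f\}$ has support in $S+\be_i$, and bringing it back to a support containing $\bzero$ requires multiplying by $y_i$. That gives only $a_i\partial_i(L)\subseteq L$, not $\partial_i(L)\subseteq L$, so you cannot normalize $c_{\bzero}=1$ inside $A$.

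\textbf{The missing idea.} Localize at the $x_i$ rather than at the $a_i$. With $\cX$ generated by the $x_i$, $A\cX\inv\iso R[x_1^{\pm1},\hdots,x_n^{\pm1};\bpartial]$ is a Laurent algebra over $R$ itself. There the $x_i$ are units, so the coefficient-ideal normalization does go through, and Lemma \ref{lem.laurent} applies with (1) and (2) verbatim. Hence $JA\cX\inv$ is everything, and $x^{\balpha}\in J$ for some $\balpha\geq\bzero$.

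For a monomial with coefficient $1$ the recursion closes. We have $y_ix^{\balpha}=a_ix^{\balpha-\be_i}$ and $\{y_i,x^{\balpha}\}=\alpha_i\partial_i(a_i)x^{\balpha-\be_i}+\lambda\,y_ix^{\balpha}$ for a scalar $\lambda$. So both $a_ix^{\balpha-\be_i}$ and $\alpha_i\partial_i(a_i)x^{\balpha-\be_i}$ lie in $J$, and (3) gives $x^{\balpha-\be_i}\in J$. Iterating gives $1\in J$. No derivative of an unknown coefficient ever appears, and that is exactly what breaks your Step 2.

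Your option (a) can also be rescued. Conditions (1) and (2) do pass from $R$ to $R_{\mathcal S}$ because the $a_i$ are regular and Poisson central; for example, $\{c/s,r\}=\{c,r\}/s$. Then $J$ contains a product $a^{\mathbf k}$. From there you would still need an explicit bracket computation with $x_i$ and $y_i$ on that specific element, not $\bpartial$-invariance of $J\cap R$.
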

\begin{proof}
($\Rightarrow$) Suppose $A$ is simple. Then \eqref{sim1} and \eqref{sim2} follow similarly to Lemma \ref{lem.laurent} while \eqref{sim3} follows from Lemma \ref{lem.gcd}.

($\Leftarrow$) Suppose \eqref{sim1}, \eqref{sim2}, and \eqref{sim3} hold.
Let $I$ be a nonzero Poisson ideal of $A$. Let $\cX$ be the multiplicative set generated by 1 and the $x_i$. Then $B=A\cX\inv \iso R[x_1^{\pm 1},\hdots,x_n^{\pm 1};\bpartial]$, where by an abuse of notation we use $\partial_i$ to denote the extension of the $\partial_i$ to the localization. By \eqref{sim1} and \eqref{sim2}, $B$ is simple by Lemma \ref{lem.laurent}. Hence, either $I=A$ or else $z^{\balpha} \in I$ for some $\balpha \geq \bzero$. After multiplying by powers of the $x_i$, we may assume that $\alpha_i \geq 0$ for all $i$. Choose $i \in [n]$ such that $\alpha_i \neq 0$. Then $z^{\balpha}y_i = a_i z^{\balpha-\be_i}$ and
\[ \{ y_i, z^{\balpha}\} = \alpha_i\partial_i(a_i)z^{\balpha-\be_i} + \lambda_{\be_i,\balpha-\alpha_i\be_i}' y_iz^{\balpha}.\]
Since $\{ y_i, z^{\balpha}\} \in I$ and $y_iz^{\balpha} \in I$, we conclude that $\partial_i(a_i)z^{\balpha-\be_i} \in I$. By \eqref{sim3}, there exist $r,s \in R$ such that $ra_i+s\partial(a_i)=1$. Consequently,
\[ z^{\balpha-\be_i} = r(a_i z^{\balpha-\be_i}) + s(\partial_i(a_i)z^{\balpha-\be_i}).\]
Continuing in this way, we obtain $1 \in I$, so $I=A$.
\end{proof}


\end{document}